\definecolor{white}{rgb}{1,1,1}
\definecolor{mygreen}{rgb}{0,0.4,0}
\definecolor{light_gray}{rgb}{0.97,0.97,0.97}
\definecolor{mykey}{rgb}{0.117,0.403,0.713}
\definecolor{mycomment}{rgb}{0.58,0.28,0}
\newlength\inwd
\newcounter{ipythcntr}
\renewcommand{\theipythcntr}{\texttt{[\arabic{ipythcntr}]}}
\ttfamily\color{mykey},
\ttfamily\linespread{0.5},
\ttfamily\color{mykey},
\ttfamily\color{mykey},
\newcommand*{\@rowstyle}{}
\newcommand*{\rowstyle}[1]{
  \gdef\@rowstyle{#1}%
  \@rowstyle\ignorespaces%
}
\newcolumntype{=}{
  >{\gdef\@rowstyle{}}%
}
\newcolumntype{+}{
  >{\@rowstyle}%
}
\newtheorem{theorem}{Theorem}[section]
\newtheorem{corollary}[theorem]{Corollary}
{\theoremstyle{remark}
\newtheorem{remark}[theorem]{Remark}

}
\theoremstyle{definition}
\newtheorem{definition}[theorem]{Definition}
\newtheorem{example}[theorem]{Example}
\newcommand{\cmark}{\ding{51}}%
\newcommand{\xmark}{\ding{55}}%
\renewcommand{\Vert}{\mathcal{V}}
\newcommand{\Hor}{\mathcal{H}}
\newcommand{\T}{{\mathcal{T}}}
\newcommand{\CC}{\mathbb{C}}
\renewcommand{\L}{\mathcal{L}}
\newcommand{\RR}{\mathbb{R}}
\newcommand{\QQ}{\mathbb{Q}}
\newcommand{\ZZ}{\mathbb{Z}}
\newcommand{\NN}{\mathbb{N}}
\newcommand{\PP}{\mathbb{P}}
\newcommand{\A}{\mathbb{A}}
\newcommand{\CO}{{\mathcal{O}}}
\newcommand{\X}{{\mathcal{X}}}
\newcommand{\bphi}{\bar \Phi}
\DeclareMathOperator{\DF}{DF}
\DeclareMathOperator{\GL}{GL}
\DeclareMathOperator{\wdiv}{Div}
\DeclareMathOperator{\supp}{supp}
\DeclareMathOperator{\conv}{conv}
\DeclareMathOperator{\vol}{vol}
\DeclareMathOperator{\Cox}{Cox}
\DeclareMathOperator{\Cl}{Cl}
\DeclareMathOperator{\Ric}{Ric}
\newcommand{\savefootnote}[2]{\footnote{\label{#1}#2}}
\newcommand{\repeatfootnote}[1]{\textsuperscript{\ref{#1}}}
\title[K\"ahler-Ricci solitons on Gorenstein del Pezzo surfaces]{On the classification of K\"ahler-Ricci solitons\\ on Gorenstein del Pezzo surfaces}
\author[J. Cable]{Jacob Cable}
\address{Jacob Cable\\ School of Mathematics, Faculty of Science and Engineering,
The University of Manchester,
Alan Turing Building, Oxford Road,
Manchester M13 9PL}
\email{\href{mailto:jacob.cable@manchester.ac.uk}{jacob.cable@manchester.ac.uk}}
\author[H. S\"u{\ss}]{Hendrik S\"u\ss}
\address{Hendrik S\"u\ss\\ School of Mathematics, Faculty of Science and Engineering
The University of Manchester,
Alan Turing Building, Oxford Road, 
Manchester M13 9PL, United Kingdom}
\email{\href{mailto:hendrik.suess@manchester.ac.uk}{hendrik.suess@manchester.ac.uk}}
\subjclass[2010]{32Q20 (Primary) 14L30, 14J45 (Secondary)}
\keywords{K-stability, Kahler-Ricci solitons, T-variety, torus action, Fano variety}
\begin{document}
\begin{abstract}
  We give a classification of all pairs $(X,\xi)$ of Gorenstein del Pezzo surfaces $X$ and vector fields $\xi$ which are K-stable in the sense of Berman-Witt-Nystr\"om and therefore are expected to admit a K\"ahler-Ricci solition. Moreover, we provide some new examples of Fano threefolds admitting a K\"ahler-Ricci soliton.
\end{abstract}

\maketitle
\section{Introduction}

By a Fano orbifold we mean a complex normal variety with only orbifold singularities and an ample anti-canonical class. It is called Gorenstein if the anti-canonical class is Cartier. In dimension $2$ those varieties are usually called Gorenstein del Pezzo surfaces. Let $X$ be a Fano orbifold and $\omega_g$ be the K\"ahler form of a K\"ahler metric $g$ on $X$. The form $\omega_g$ is called a \emph{K\"ahler-Ricci soliton} if there exists an orbifold holomorphic vector field $\xi$, such that
\[
\Ric(\omega_g)-\omega_g = L_\xi\omega_g
\]
holds, where $L_\xi\eta$ denotes the Lie derivative of a form $\eta$ with respect to $\xi$. This implies that $L_{\xi}\omega_g$ is real-valued and, hence, $L_{\Im \xi}\omega_g = 0$, where $\Im \xi$ denotes the imaginary part of $\xi$. Therefore, $\Im \xi$ generates a one-dimensional Hamiltonian torus action on $X$. In the following we will identify $\xi$ with this action. If $\xi=0$ the metric $g$ is called \emph{K\"ahler-Einstein}, else we speak of a \emph{non-trivial} K\"ahler-Ricci soliton.

By \cite{tian02} we know that such a soliton metric is unique if it exists. On the other hand, by \cite{berman2014complex} together with \cite{datar16} the existence of such a metric (at least in the smooth case)  correponds to an algebro-geometric stability condition, known as K-stability. The key objects involved in defining K-stability are test configurations: 
\begin{definition}
Let $(X,L)$ be a polarized projective variety.
  A \emph{test configuration} for $(X,L)$ is a $\CC^*$-equivariant flat family $\X$ over $\A^1$ equipped with a relatively ample equivariant $\QQ$-line bundle $\L$ such that 
\begin{enumerate}
\item The $\CC^*$-action $\lambda$ on $(\X,\L)$ lifts the standard $\CC^*$-action on $\A^1$;
\item The general fiber is isomorphic to $X$, with $\L$ restricting to $L$.
\end{enumerate}
A test configuration with $\X \cong X \times \A^1$ is called a \emph{trivial} or a \emph{product configuration}. A test configuration with normal special fiber $\X_0$ is called \emph{special}. 
Given an algebraic group $G$ with action on $X$, a test configuration is called \emph{$G$-equivariant} if the action extends to $(\X,\L)$ and commutes with the $\CC^*$-action of the test configuration.
\end{definition}

\begin{remark}
 By Hironaka's Lemma \cite[III.9.12]{MR0463157} the normality of the fibers induces the normality of the total space of the family. Hence, a special test configuration $(\X,\L)$ has normal total space $\X$.
\end{remark}


We will primarily be concentrating on the situation where $X$ is Fano and $L=\CO(- K_X)$. It follows that the special fiber $\X_0$ is $\QQ$-Fano. 
 We proceed to define the modified Donaldson-Futaki invariants as they appeared in \cite{berman2014complex}.

Let $\X_0$ be any $\QQ$-Fano variety equipped with the action of an algebraic torus $T'$, and let $\ell\in \NN$ be the smallest natural number such that $-\ell K_{\X_0}$ is Cartier. Let $M'$ and $N'$ be the lattices of characters and one-parameter subgroups of $T'$, respectively. We denote the associated $\RR$-vector spaces by $M'_\RR$ and $N'_\RR$, respectively. Fix an element $\xi\in N'_\RR$.
 Consider the canonical linearisation for $\L_0 = \CO(-\ell K_{\X_0})$ coming from the identification $\L_0 \cong (\bigwedge^{\dim X} \T_{\X_0})^{\otimes \ell}$. Then we set $l_k=\dim H^0(\X_0,\L_0^{\otimes k})$ and for every $v \in N'$
\[w_k(v)= \sum_{u \in M'} \langle u,v\rangle \cdot e^{\nicefrac{\langle u, \xi \rangle}{k}}\cdot \dim H^0(\X_0,\L_0^{\otimes k})_{u}.\]
Now, 
\begin{equation}
  \label{eq:3}
  F_{\X_0,\xi}(v):=-\lim_{k \to \infty} \frac{w_k(v)}{k\cdot l_k\cdot \ell},
\end{equation}
defines a linear form on $N_\RR'$, i.e. an element of $M_\RR'$. It is called the (modified) Futaki character.

Consider now a Fano variety $X$ with a (possibly trivial) torus $T$ acting on it. Fix  $\xi \in N_\RR$, where $N$ is the lattice of one-parameter subgroups of $T$. Let $\X_0$ be the central fiber of a special $T$-equivariant test configuration $\X$ for $(X,\CO(-K_X))$. Then $\X_0$ comes equipped with a $T'=T\times \CC^*$-action induced by the test configuration. Let $v\in N'$ denote the one-parameter subgroup corresponding to the $\CC^*$-action of the test configuration. Note that the inclusion $T\hookrightarrow T'$ induces an inclusion of one-parameter subgroups $N\hookrightarrow N'$.

\begin{definition}
 Given a special test configuration $(\X,\L)$ of $(X,\CO(-K_X))$ as above,
 its \emph{modified Donaldson-Futaki invariant} is defined as \[\DF_\xi(\X)=F_{\X_0,\xi}(v).\]
\end{definition}

\begin{definition}
\label{def:k-stability}
  Consider a Fano variety $X$ with action by a reductive group $G$ containing a maximal torus $T \subset G$ and $\xi \in N_\RR$. The pair $(X,\xi)$ is called \emph{equivariantly K-stable} if $\DF_\xi(\X) \geq 0$ for every $G$-equivariant special test configuration $\X$ as above and we have equality exactly in the case of product test configurations. If $\xi=0$ we say $X$ itself is \emph{equivariantly K-stable}.
\end{definition}

The following result by Datar and Sz\'ekelyhidi motivates the study of \emph{equivariant} K-stability:
\begin{theorem}[{\cite{datar16}}]
\label{thm:gabor}
For a smooth Fano $G$-variety $X$, the variety  $X$ admits a K\"ahler-Ricci soliton with respect to $\xi$ if and only if the pair $(X,\xi)$ is equivariantly K-stable.
\end{theorem}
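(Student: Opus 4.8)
The plan is to establish this as a Yau--Tian--Donaldson type correspondence for K\"ahler-Ricci solitons, splitting into the two implications and treating the existence direction as the substantial one. Throughout, the relevant analytic objects should be the \emph{modified} functionals weighted by $e^{\theta_\xi}$, where $\theta_\xi$ is the Hamiltonian potential of $\xi$ with respect to the reference form; the soliton equation $\Ric(\omega)-\omega = L_\xi\omega$ is then the Euler--Lagrange equation of a modified Ding (or Mabuchi) functional, and the modified Donaldson--Futaki invariant $\DF_\xi(\X)$ computed in \eqref{eq:3} should be identified with the asymptotic slope of this functional along the geodesic ray associated to the test configuration $\X$.

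For the easy direction, suppose a soliton exists. First I would show that existence forces the modified functional to be coercive modulo the reduced automorphism group, and then invoke the slope formula: since $\DF_\xi(\X)$ is the limit derivative of the weighted functional along the ray coming from $\X$, coercivity gives $\DF_\xi(\X)\ge 0$ for every special test configuration. To pin down the equality case I would appeal to Tian's uniqueness result (already cited), which identifies the configurations achieving equality with the product ones. This half requires no new geometric estimates.

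The hard direction is existence from K-stability, and this is where the main work lies. The approach is a \emph{modified continuity method}: I would solve a twisted soliton equation $\Ric(\omega_t) = t\,\omega_t + (1-t)\,\alpha + L_\xi\omega_t$ for a fixed background form $\alpha$, and set $I=\{t\in[0,1] : \text{solvable}\}$. Openness of $I$ is an implicit function theorem argument, where the weight $e^{\theta_\xi}$ renders the linearized operator self-adjoint against the weighted volume, so that invertibility modulo the space of holomorphic vector fields follows once the modified Futaki character vanishes in the appropriate direction. Closedness requires a priori estimates, and the crux is a \emph{partial $C^0$ estimate} in the style of Donaldson--Sun, adapted to the weighted setting: if $\sup I = T_\infty<1$, one extracts a Gromov--Hausdorff limit which the partial $C^0$ estimate realizes as a normal $\QQ$-Fano degeneration of $X$, i.e.\ the central fiber $\X_0$ of a special test configuration $\X$ with $\DF_\xi(\X)\le 0$. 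K-stability then forces this configuration to be a product, which in turn allows the continuity path to be continued past $T_\infty$, yielding a contradiction and hence $1\in I$.

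The role of $G$-equivariance, and the place I expect the most delicate bookkeeping, is in making every step above $G$-invariant. Since $G$ contains a maximal compact torus acting by isometries and the twisted equation is $G$-covariant, uniqueness of solutions forces each $\omega_t$ to be $G$-invariant; consequently the Gromov--Hausdorff limit inherits a $G$-action and the destabilizing degeneration can be taken $G$-equivariant. This is precisely what reduces the full K-stability hypothesis to its equivariant form. The principal obstacle overall is the partial $C^0$ estimate together with the identification of the limit as an algebraic test configuration in the presence of the drift field $\xi$, which breaks the symmetry exploited in the K\"ahler--Einstein case and forces weighted versions of the convergence and regularity theory.
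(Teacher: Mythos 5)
This theorem is stated in the paper as an imported result, cited verbatim from \cite{datar16} with no proof given, so the only meaningful comparison is with the strategy of the cited source --- and your sketch reconstructs that strategy essentially faithfully: the existence direction via the twisted continuity path $\Ric(\omega_t)=t\,\omega_t+(1-t)\alpha+L_\xi\omega_t$, openness by the weighted implicit function theorem, closedness via a partial $C^0$ estimate and Gromov--Hausdorff limits realized as $\QQ$-Fano central fibers of special test configurations with nonpositive modified Futaki invariant, and $G$-invariance of the metrics along the path forcing equivariance of the degeneration (which is exactly what reduces the hypothesis to equivariant K-stability). Your treatment of the converse is likewise the right one (it is the coercivity/slope argument of Berman--Witt Nystr\"om, which the present paper invokes separately as \cite[Theorem 1.5]{berman2014complex} even in the singular case), with the one small inaccuracy that strict positivity of $\DF_\xi$ for non-product configurations comes from strict convexity properties of the modified Ding functional rather than directly from Tian's uniqueness theorem.
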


By \cite[6.1.2]{zbMATH06546438} the Gorenstein del Pezzo surfaces of degree $\leq 4$ admitting a K\"ahler-Einstein metric are known to be exactly those which are either smooth or fit into one of the following combinations of degree and singularity type.
    \begin{description}
  \item[Degree $1$] $2D_4$ or a combination of $A_k$-singularities with $k \leq 7$,
  \item[Degree $2$] $2A_3$ or a combination of $A_1$ and $A_2$ singularities,
  \item[Degree $3$] $3A_2$ or $\ell A_1$ with $\ell \geq 1$,
  \item[Degree $4$] $2A_1$ or $4A_1$.
  \end{description}
Hence, it is natural to ask which of the remaining ones admit at least a K\"ahler-Ricci soliton. In this paper we approach this question by giving a complete classification of pairs $(X,\xi)$ of Gorenstein del Pezzo surfaces $X$ and vector fields $\xi$ as above which are equivariantly K-stable with respect to the torus action generated by $\xi$. 

\begin{theorem}
Among the Gorenstein del Pezzo surfaces with non-trivial holomorphic vector fields. Exactly the following combinations of degree/singularity type admit non-trivial equivariantly K-stable pairs $(X,\xi)$
  \label{thm:surface-solitons}
  \begin{description}
  \item[Degree $1$] $E_8$, $E_7A_1$, $E_6A_2$
  \item[Degree $2$] $A_5A_2$, $D_6A_1$, $E_7$, $E_6$, $D_5A_1$, $D_43A_1$, 
  \item[Degree $3$] $E_6$, $A_4A_1$, $D_5$, $D_4$, $A_32A_1$, 
  \item[Degree $4$] $D_5$, $D_4$, $A_4$, $A_3$,\\
    toric: $A_32A_1$, $A_22A_1$, 
  \item[Degree $5$] $A_4$, $A_3$, $A_1$,\\
    toric: $2A_1$, $A_2A_1$ 
  \item[Degree $6$] $A_1$,\\
    toric: $A_1$, $2A_1$, $A_2A_1$, 
  \item[Degree $7$] toric: \emph{smooth}, $A_1$,
  \item[Degree $8$] toric: \emph{smooth}, $A_1$.
  \end{description}
\end{theorem}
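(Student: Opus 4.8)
The plan is to reduce the statement to a finite, case-by-case combinatorial verification organized by degree and singularity type. First I would invoke the classification of Gorenstein del Pezzo surfaces to list those carrying a non-trivial holomorphic vector field; these are exactly the surfaces whose automorphism group has positive dimension, so each admits an effective action of an algebraic torus $T \subseteq \Aut(X)$ of rank $1$ (the \emph{complexity-one} case) or rank $2$ (the \emph{toric} case, flagged as such in the statement). For each candidate I would fix a maximal torus $T$ and record its Altmann--Hausen description: in the toric case the reflexive polytope, and in the complexity-one case the divisorial fan over $\PP^1$, from which $-K_X$ and the weight decomposition of the anticanonical sections are readable off. By Theorem~\ref{thm:gabor} together with the reduction to $G$-equivariance, it suffices to evaluate $\DF_\xi(\X)$ on $T$-equivariant special test configurations, so the entire analysis takes place inside the combinatorics of these $T$-varieties.

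The second step is to pin down, for each candidate, the soliton candidate $\xi \in N_\RR$. Its defining property is that the modified Futaki character $F_{\X_0,\xi}$ of Equation~\eqref{eq:3} vanishes on the sublattice $N \hookrightarrow N'$ coming from $T$ itself, since the corresponding product test configurations must realize equality in Definition~\ref{def:k-stability}. Using asymptotic Riemann--Roch to replace $w_k/(k\,l_k)$ by an integral against the Duistermaat--Heckman measure of the anticanonical moment polytope, this becomes the condition that the $\xi$-weighted barycenter of that polytope sits at the origin in the directions dual to $T$. This is the vanishing of the gradient of the strictly convex Tian--Zhu weighted-volume functional, so $\xi$ exists and is unique, and I would compute it explicitly from the polytope or divisorial data in every case, noting that $\xi = 0$ recovers precisely the Kähler--Einstein members listed earlier, which are discarded here.

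The heart of the argument is the enumeration and evaluation of the $T$-equivariant special test configurations. For the toric surfaces this is immediate: by the toric existence theory (Wang--Zhu in the smooth case, Berman--Berndtsson for log Fano toric varieties) every toric Fano admits a Kähler--Ricci soliton, so such a pair $(X,\xi)$ is automatically K-stable and the only remaining question is whether the computed $\xi$ is non-zero, again a barycenter computation. For the complexity-one surfaces I would use the combinatorial classification of equivariant degenerations: over the base $\PP^1$ the special test configurations split into \emph{horizontal} ones, induced by one-parameter subgroups of the acting torus, and \emph{vertical} ones, governed by integral/piecewise-affine modifications of the polyhedral coefficients supported over finitely many points of $\PP^1$. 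In each family $\DF_\xi(\X)=F_{\X_0,\xi}(v)$ becomes an explicit integral of the $\xi$-weighted Duistermaat--Heckman density over the modified polytope, so the stability inequality $\DF_\xi \geq 0$ (with equality only for products) turns into finitely many convex inequalities in the combinatorial parameters.

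The main obstacle is establishing completeness of this list of test configurations, that is, ruling out some exotic $T$-equivariant degeneration that would destabilize a surface I would otherwise retain. This is exactly where the complexity-one combinatorics does the work: the classification guarantees that every equivariant special degeneration is one of the finitely many horizontal or vertical types, so no further configurations can occur and the problem collapses to a bounded computation. I would then carry this out across all degree/singularity combinations, using computer-algebra assistance for the weighted barycenters and Futaki integrals, keeping a pair $(X,\xi)$ when all resulting inequalities hold strictly off the product locus and discarding it as soon as a single destabilizing configuration appears. The surfaces that survive this procedure are precisely the ones tabulated.
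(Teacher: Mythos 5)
Your proposal is correct and follows essentially the same route as the paper: run through the combinatorial classification of the relevant surfaces, determine the unique soliton candidate $\xi$ from the vanishing of the weighted Futaki character via Tian--Zhu convexity, dispose of the toric cases by known existence results, and reduce the complexity-one cases to finitely many certified sign checks of $\DF_\xi(\X_{y,0,1})$ using the classification of equivariant special test configurations from \cite{is15}, carried out with computer assistance. Three small caveats: Theorem~\ref{thm:gabor} is not actually needed (and is only available for smooth Fanos), since the statement concerns equivariant K-stability itself; the $\xi=0$ cases are not ``precisely the K\"ahler--Einstein members'' (nos.~16, 18 and 25 in Table~\ref{longtable} have $\xi=0$ yet are not K\"ahler--Einstein --- they are still correctly discarded, because uniqueness of $\xi$ rules out any non-trivial K-stable pair); and since $\xi$ admits no closed form, the final sign verifications must be certified via the intermediate value theorem and interval arithmetic, as in the paper's steps (ii) and (iv), rather than performed on a bare numerical approximation.
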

For the toric cases the existence of a K\"ahler-Ricci soliton is known by \cite{Shi2012}.  Note, that Theorem~\ref{thm:gabor} only considers the smooth case. We hope and expect that the methods from \cite{datar16} will work in our setting as well. However, at the moment we do not have the corresponding statement in the case of orbifolds. This prevents us from actually proving the existence of K\"ahler-Ricci solitons in the cases considered in Theorem~\ref{thm:surface-solitons}. 

On the other hand, the implication of K-stability by the existence of a K\"ahler-Ricci soliton holds also in the singular case by \cite[Theorem 1.5]{berman2014complex}. This allows us to rule out the existence of K\"ahler-Ricci solitons for the remaining Gorenstein del Pezzo surfaces with non-trivial vector fields. Hence, by using a classification of such surfaces from \cite{elaine} we obtain the following corollary.

\begin{corollary}
 \label{cor:surface-non-solitons}
  The following combinations of degree/singularity type do not admit a K\"ahler-Ricci soliton metric (neither non-trivial nor K\"ahler-Einstein).
  \begin{description}
  \item[Degree $3$] $A_5A_1$, $2A_2A_1$, $2A_2$
  \item[Degree $4$] $A_3A_1$, $3A_1$
  \item[Degree $5$] $A_24$,
  \item[Degree $6$] $A_2$.
  \end{description}
\end{corollary}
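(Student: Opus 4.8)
The plan is to obtain Corollary~\ref{cor:surface-non-solitons} as a bookkeeping consequence of the main classification, Theorem~\ref{thm:surface-solitons}, together with the fact that the existence of a K\"ahler-Ricci soliton forces K-stability even in the singular setting, by \cite[Theorem 1.5]{berman2014complex}. Concretely, I would argue by contradiction: suppose one of the listed surfaces $X$ admitted a K\"ahler-Ricci soliton with respect to some vector field $\xi$. As recalled in the introduction, $\Im\xi$ then generates a (possibly trivial) Hamiltonian torus action on $X$, so $\xi$ lies in $N_\RR$ for a maximal torus $T \subset \Aut(X)$, and by \cite[Theorem 1.5]{berman2014complex} the pair $(X,\xi)$ is equivariantly K-stable in the sense of Definition~\ref{def:k-stability}. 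It then suffices to show that no such $\xi$ can exist for the surfaces in question, which I would split according to whether $\xi = 0$ (the K\"ahler-Einstein case) or $\xi \neq 0$.

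For the non-trivial case I would first observe that K-stability already pins down $\xi$: applying Definition~\ref{def:k-stability} to the product test configurations induced by the one-parameter subgroups $v \in N$ forces $F_{X,\xi}(v) = 0$ for every such $v$ (a product configuration must fall into the equality case), so the modified Futaki character must vanish on the whole torus. Thus the only possible soliton vector field is the distinguished $\xi$ determined by this vanishing condition, and this is exactly the $\xi$ analysed surface-by-surface in the proof of Theorem~\ref{thm:surface-solitons}. Consequently a surface admits a K-stable pair $(X,\xi)$ with $\xi \neq 0$ precisely when its degree/singularity type appears in the list of that theorem. Since the surfaces of the corollary are by construction those occurring in the classification of Gorenstein del Pezzo surfaces with non-trivial vector fields of \cite{elaine} but absent from Theorem~\ref{thm:surface-solitons}, they admit no non-trivial soliton.

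It then remains to exclude a K\"ahler-Einstein metric, i.e. the case $\xi = 0$. In degrees $\leq 4$ this is read off directly from the list of \cite[6.1.2]{zbMATH06546438} recalled in the introduction: each type in the corollary fails to match any entry (for instance $2A_2$, $A_5A_1$ and $2A_2A_1$ are neither $3A_2$ nor of the form $\ell A_1$ in degree $3$, and $A_3A_1$, $3A_1$ are neither $2A_1$ nor $4A_1$ in degree $4$), while in the remaining higher-degree cases the vanishing condition above shows the soliton candidate $\xi$ to be non-zero, so $\xi = 0$ never arises and no K\"ahler-Einstein metric is possible. The main delicacy — and the only place where genuine care is needed, the heavy analysis already residing in Theorem~\ref{thm:surface-solitons} — is the exhaustiveness and correct matching of the two enumerations: one must verify that the classification of \cite{elaine} captures every Gorenstein del Pezzo surface carrying a non-trivial vector field, and that each type recorded in the corollary is truly missing from Theorem~\ref{thm:surface-solitons} rather than merely listed under a different label or degree. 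Once this cross-referencing is carried out, the contradiction is immediate and the corollary follows.
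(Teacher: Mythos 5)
Your proposal is correct and takes essentially the same route as the paper: the paper likewise derives the corollary from Theorem~\ref{thm:surface-solitons} combined with the singular-case implication ``soliton $\Rightarrow$ K-stability'' of \cite[Theorem 1.5]{berman2014complex} and the completeness of the classification of Gorenstein del Pezzo surfaces with non-trivial vector fields from \cite{elaine}. Your treatment of the K\"ahler--Einstein case (the degree $\leq 4$ list of \cite[6.1.2]{zbMATH06546438}, plus the nonvanishing of the Futaki character at $\xi=0$ pinning down a non-zero soliton candidate in the remaining cases) matches the computations already recorded in the paper's proof of Theorem~\ref{thm:surface-solitons} and its Table~\ref{longtable}.
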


In \cite{tfano} certain smooth Fano threefolds with 2-torus action were considered and the paper \cite{is15} determined which of them admit a K\"ahler-Einstein metric via equivariant K-stability. Moreover, for some of the remaining ones the existence of a K\"ahler-Ricci soliton could be proven by the simple observation that in these cases there are no equivariant special test configurations beside the product ones. In this paper we consider some of the remaining cases and obtain the following theorem.

\begin{theorem}
  \label{thm:threefold-solitons}
  The Fano threefolds 2.30, 2.31, 3.18, 3.22, 3.23, 3.24, 4.8 from Mori and Mukai's classification \cite{mori-mukai} admit a non-trivial  K\"ahler-Ricci soliton.
\end{theorem}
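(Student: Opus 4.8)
The plan is to prove each of the seven named Fano threefolds admits a non-trivial Kähler-Ricci soliton by verifying equivariant K-stability of the pair $(X,\xi)$ for a suitable soliton vector field $\xi$, and then invoking Theorem~\ref{thm:gabor} (Datar-Székelyhidi), which applies directly since all these threefolds are smooth. These are precisely the cases left open in \cite{is15}, where no Kähler-Einstein metric exists, so the soliton candidate must have $\xi \neq 0$. The strategy exploits the 2-torus action on each threefold: by the results underlying equivariant K-stability, it suffices to test $\DF_\xi(\X) \geq 0$, with equality only for products, ranging over $G$-equivariant special test configurations, and the presence of a large torus $T$ (here 2-dimensional) drastically cuts down the configurations one must consider.

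The key steps I would carry out are as follows. First, for each threefold I would record its description as a $T$-variety of complexity one (a torus action with one-dimensional quotient), following the framework of \cite{tfano}, encoding $X$ via polyhedral/divisorial fan data; this makes the cohomology of multiples of $-K_X$ computable as lattice-point data and renders the weights $\dim H^0(\X_0,\L_0^{\otimes k})_u$ explicit. Second, I would determine the soliton candidate $\xi \in N_\RR$: since $\xi$ must lie in the center of the Lie algebra of a maximal reductive subgroup, it is constrained to the torus $T$, and its precise value is pinned down by the requirement that the modified Futaki character $F_{\X_0,\xi}$ vanishes on the product configurations — equivalently $\xi$ is the unique minimizer of an associated convex functional (the soliton candidate in the Tian-Zhu/Berman-Witt-Nyström sense). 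Concretely this is the barycenter-type equation $\int_{P} \langle u,\cdot\rangle\, e^{\langle u,\xi\rangle}\,du = 0$ over the relevant moment polytope $P$. Third, I would enumerate all $G$-equivariant special test configurations: using the complexity-one structure, these correspond to combinatorial degenerations of the fan data, and after fixing the maximal torus there are only finitely many to check. Fourth, for each such nontrivial configuration I would compute $\DF_\xi(\X) = F_{\X_0,\xi}(v)$ from \eqref{eq:3} as an explicit integral of lattice data weighted by $e^{\langle u,\xi\rangle}$ and verify it is strictly positive.

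The main obstacle I expect is twofold. The first difficulty is solving for $\xi$: the equation determining the soliton vector field is transcendental (it involves exponentials of the linear pairing), so unlike the Kähler-Einstein case one cannot expect rational values, and one must either solve numerically or argue that the sign of $\DF_\xi$ for each test configuration is robust across an interval containing the true $\xi$. The second, and I think harder, difficulty is ensuring the enumeration of equivariant special test configurations is genuinely \emph{complete}: one must argue that restricting to $G$-equivariant configurations for the full automorphism-group torus $T$ suffices (so that non-toric degenerations need not be tested), and then that the combinatorial list of torus-equivariant special degenerations of the complexity-one data is exhaustive. I would handle completeness by appealing to the equivariant reduction in Definition~\ref{def:k-stability} together with the classification of central fibers as normal $\QQ$-Fano complexity-one $T$-varieties, checking that each produces a positive Donaldson-Futaki value. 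Given the transcendental nature of the computations, I anticipate that verifying positivity will in practice be carried out by explicit (possibly computer-assisted) evaluation of the resulting integrals for the optimal $\xi$, with the theoretical content residing in establishing that these finitely many configurations are all that need to be tested.
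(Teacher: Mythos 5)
Your plan follows the paper's proof almost step for step: the combinatorial complexity-one description from \cite{tfano}, the soliton candidate pinned down by vanishing of the modified Futaki character, the finite list of $T$-equivariant special test configurations $\X_{y,v,m}$ taken from the classification in \cite[Theorem 4.3]{is15} (so your worry about completeness of the enumeration is discharged by exactly the citation the paper itself relies on, combined with the equivariant reduction of Theorem~\ref{thm:gabor}), analytic evaluation of the Donaldson--Futaki integrals, interval-arithmetic certification of positivity, and finally Theorem~\ref{thm:gabor}, which applies because all seven threefolds are smooth.

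The one genuine gap is in how you certify the location of $\xi$. Your fallback --- ``argue that the sign of $\DF_\xi$ is robust across an interval containing the true $\xi$'' --- presupposes that you can \emph{prove} some small region contains the true $\xi$, and in two dimensions the intermediate value theorem no longer does this: $F_{X,\xi}=0$ is a system of two transcendental equations, not one. The paper fills this hole in two ways. For 2.30, 2.31, 3.18, 3.22, 3.24 and 4.8 it exploits an involution $\sigma\in\GL(M)$ with $\deg(\Phi\circ\sigma)=\deg\Phi$, which by the identity $F_{X,\sigma^{\!*}(\xi)}=F_{X,\xi}\circ\sigma^*$ and uniqueness of the candidate forces $\xi\in N_\RR^{\sigma^*}$, a line; after that the IVT applies to a single variable. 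For 3.23, which admits no such symmetry, it uses precisely the convex-minimizer characterization you alluded to: $\xi$ is the unique critical point of $G(v)=\int_\Box \deg\bar\Phi(u)\,e^{\langle u,v\rangle}\,du$, and one certifies a tiny rectangle $D$ by checking, via interval arithmetic on a fine subdivision of $\partial D$, that the outer normal derivative $\nabla_n G$ is strictly positive along $\partial D$; compactness then places the minimum of $G$, hence $\xi$, in the interior of $D$. Since you already noted that $\xi$ minimizes a convex functional, this is a short step from your proposal, but without it (or the symmetry reduction) your positivity checks of $\DF_\xi(\X_{y,0,1})$ are not anchored to the actual soliton candidate and the argument is incomplete.
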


The common feature of the surfaces and threefolds considered in Theorem~\ref{thm:surface-solitons} and Theorem~\ref{thm:threefold-solitons}, repectively, is the presence of an effective action of an algebraic torus of one dimension less than the variety it acts on. In the following we will call these varieties \emph{T-varieties of complexity $1$}. Note, that surfaces with non-trivial K\"ahler-Ricci soliton automatically fall into this class due to the torus action generated by vector field. However, for threefolds this is indeed an additional condition.

In Section~\ref{sec:t-vars} we review the combinatorial description of Fano T-varieties of complexity $1$ and their equivariant test configurations as it was developed in \cite{is15}.

In Section~\ref{sec:classification} we state the classification of Gorenstein del Pezzo surfaces from \cite{elaine} in terms of their combinatorial data and describe the computational methods, that we used to determine which of these surfaces can be complemented to a K-stable pair in the sense of Definition~\ref{def:k-stability}.

Finally, in Section~\ref{sec:threefolds} we apply the same methods to the threefolds from \cite{tfano} and \cite{is15} to obtain new examples of K\"ahler-Ricci solitons on Fano threefolds.

In an appendix we provide examples of the computer assisted calculations, which we used to obtain our results. The complete computations are available in the ancillary files \cite{sage-code}.

\section{Combinatorial description of T-varieties of complexity $1$}
\label{sec:t-vars}
We fix an algebraic torus $T \cong (\CC^*)^n$. We denote its character lattice by $M$ and the dual lattice of co-characters or one-parameter subgroups by $N$. The corresponding vector spaces are denoted by $M_\RR$ and $N_\RR$, respectively.

We will describe Gorenstein Fano T-varieties of complexity $1$ by the following set of data. A lattice polytope $\Box$ in $M_\RR$, which contains the origin, together with a concave function 
\[\Phi \colon \Box \rightarrow \wdiv_\RR \PP^1, \quad
u \mapsto \sum_{y \in \PP^1} \Phi_y(u) \cdot y,
\] 
such that 
\begin{enumerate}
\item $\Phi$ is piecewise affine linear, i.e. given by affine linear functions on a finite polyhedral subdivision of $\Box$,
\item for $y\in \PP^1$, the graph of $\Phi_y$ has integral vertices,
\item for every $u$ in the interior of $\Box$, $\deg \Phi(u) > -2$.
\item The affine linear pieces of $\Phi_y$ have the form $u \;\mapsto\;  \frac{\langle v, u \rangle-\mu+1}{\mu}$, where $v\in N$ is a primitive lattice element. 
\label{item:distance-vertical};
\item every facet $F$ of $\Box$ with $(\deg \circ \Phi)|_F \not \equiv -2$ has lattice distance $1$ from the origin. \label{item:lattice-distance-horizontal}
\end{enumerate}

This gives rise to a polarised $T$-variety of complexity one in the following way. Consider 
$\bar \Phi \colon \Box  \rightarrow \wdiv_\RR \PP^1$ given by $\bar \Phi(u) = \Phi(u)+D$, where $D$ is some integral divisor of degree $2$. Then 
\begin{equation}
  H^0(X,L^{\otimes k})=\bigoplus_{u\in \Box\cap \frac{1}{k}M} H^0(\PP^1,\CO(\lfloor k\cdot \bar \Phi(u)\rfloor))\label{eq:sections}
\end{equation}
defines a polarised variety $(X,L)$. It is easy to see that a different divisor $D'$ of degree $2$ will give rise to the same polarised variety, since $D-D'$ is principal. Morover, associating  $H^0(\PP^1,\CO(\lfloor k\cdot \bar \Phi(u)\rfloor))$ the weight $ku \in M$ induces an $M$-grading on the section ring of $L$ and, hence, a $T$-action on $X$. 

Indeed, the function $\bar \Phi$ was called a Fano divisorial polytope in \cite{tfano,is15} and it was shown there that $(X,L)$ is a Gorenstein canonical variety polarised by its ample anticanonical line bundle.


\begin{example}[Cubic surface]
\label{exp:running} 
 Consider the map $\Phi:[-1,3] \to \wdiv_\RR(\PP^1)$ given by
  \[
  \Phi(u) = \Phi_0(u) \cdot \{0\} + \Phi_\infty(u) \cdot \{\infty\} + \Phi_1(u) \cdot \{1\}.
  \]
  with
  $\Phi_0(u) = \min \{-u,0\}$, $\Phi_\infty(u) = \frac{u-3}{4}$ and $\Phi_1(u) = \frac{u-1}{2}$.
  This fulfils the conditions (i)-(v) above.  Note, that we have $\deg \Phi(-1)=\deg \Phi(3)=-2$ and part~(\ref{item:lattice-distance-horizontal}) of the conditions is void in this case.

Now, we are interested in a description of the associated section ring. Choose $\bphi=\Phi + 2\cdot\{0\}$ then
\begin{align*}
  \bphi(-1) &= 2\{0\} -\{\infty\} -\{1\}\\
  \bphi(0) &= 2\{0\} -\frac{3}{4}\{\infty\} -\frac{1}{2}\{1\}\\
  \bphi(1) &= \{0\} -\frac{2}{4}\{\infty\} \\
  \bphi(2) &=  -\frac{1}{4}\{\infty\} + \frac{1}{2}\{1\}\\
  \bphi(3) &= -\{0\} + \{1\}\\
\end{align*}
After rounding down the corresponding divisors have degree $0$ with the exception of $\lfloor \bphi(2)\rfloor$, which has degree $-1$. Hence, in degree $k=1$ with respect to the usual $\ZZ$-grading of the section ring we find the following four generators, where $z^k$ keeps track of this degree and $\chi^u$ keeps track of the weight $u \in M \cong \ZZ$ as in (\ref{eq:sections}).
\[x_0 = (y-1)y^{-2} \cdot z\chi^{-1},\;
  x_1 = (y-1)y^{-2} \cdot z\chi^{0},\;
  x_2 = y^{-1} \cdot z\chi^{1},\;
  x_3 = y/(y-1) \cdot z\chi^{3}\]
On can check that these elements generate the section ring. The relations are generated by $x_0x_2x_3 + x_1^2x_3 + x_2^3$. Hence, we obtain a cubic surface.
\end{example}

As in the toric case it is possible to read off many properties and invariants of the variety directly from $\Phi:\Box \to \wdiv_\RR\PP^1$, see e.g. \cite{tfano}. Here, we are mainly interested in the Fano degree and in the Cox ring.

The Fano degree is the top self-intersection number of the anti-canonical divisor. It can be calculated from the combinatorial data by the following 

\begin{theorem}[{\cite[Proposition 3.31]{tidiv}}]
\label{thm:degree} For the Fano degree one obtains
  \[(-K_X)^{n} = n! \int_\Box \left(\deg \bar \Phi \right) = n! \int_\Box \left(2+ \deg \Phi \right).\]
\end{theorem}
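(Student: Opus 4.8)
The second equality is immediate: since $\bphi = \Phi + D$ with $\deg D = 2$, we have $\deg\bphi = 2 + \deg\Phi$ pointwise, so the two integrands agree. The content is therefore the first equality, $(-K_X)^n = n!\int_\Box \deg\bphi$, where $n = \dim X$ and $\Box \subset M_\RR$ has dimension $n-1$. The plan is to read off the leading coefficient of the Hilbert function of $L = \CO(-K_X)$ directly from the section-ring decomposition~\eqref{eq:sections} and identify it with the integral. Concretely, since $L$ is ample and $-K_X$ is Cartier (Gorenstein), asymptotic Riemann--Roch gives
\[
(-K_X)^n = L^n = \lim_{k\to\infty} \frac{n!}{k^n}\dim H^0(X, L^{\otimes k}),
\]
where the higher cohomology is absorbed into lower-order terms by Serre vanishing, so that $h^0$ and $\chi$ share their leading term.

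First I would substitute~\eqref{eq:sections}, writing
\[
\dim H^0(X, L^{\otimes k}) = \sum_{u \in \Box\cap\frac1k M} h^0\!\left(\PP^1, \CO(\lfloor k\,\bphi(u)\rfloor)\right).
\]
On $\PP^1$ one has $h^0(\CO(d)) = \max(0, d+1)$, and $\deg\lfloor k\,\bphi(u)\rfloor = \sum_{y\in\PP^1}\lfloor k\,\bphi_y(u)\rfloor$. Because $\bphi(u)$ is supported at finitely many points of $\PP^1$, rounding down introduces an error bounded independently of $k$ and $u$, so $\deg\lfloor k\,\bphi(u)\rfloor = k\deg\bphi(u) + O(1)$. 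Condition~(iii) guarantees $\deg\bphi(u) = 2 + \deg\Phi(u) > 0$ on the interior of $\Box$; hence for each interior $u$ and all large $k$ the degree is positive and $h^0 = k\deg\bphi(u) + O(1)$.

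Then I would evaluate the asymptotics of the sum. The points $u \in \frac1k M \cap \Box$ form a grid of mesh $1/k$ in the $(n-1)$-dimensional space $M_\RR$, so there are $O(k^{n-1})$ of them, and the boundary stratum where $\deg\bphi \leq 0$ has measure zero and contributes only to lower order. Summing the per-point estimate,
\[
\dim H^0(X,L^{\otimes k}) = k\!\!\sum_{u\in\frac1k M\cap\Box}\!\!\deg\bphi(u) + O(k^{n-1}),
\]
and the remaining sum is a Riemann sum: with the lattice-normalised measure on $M_\RR$,
\[
\frac{1}{k^{n-1}}\sum_{u\in\frac1k M\cap\Box}\deg\bphi(u)\;\longrightarrow\;\int_\Box\deg\bphi,
\]
since $\deg\bphi$ is continuous and piecewise affine, hence Riemann integrable. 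Combining the two displays yields $\dim H^0(X,L^{\otimes k}) = k^n\int_\Box\deg\bphi + O(k^{n-1})$, and dividing by $k^n/n!$ and passing to the limit gives the claim.

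The main obstacle will be the careful bookkeeping of the two independent sources of lower-order error and the verification that they are genuinely $o(k^n)$: the $O(1)$-per-point floor discrepancy accumulated over $O(k^{n-1})$ weights, and the $h^0 = 0$ versus $h^0 = d+1$ dichotomy near the facets of $\Box$ where $\deg\bphi$ vanishes or changes sign. Making the Riemann-sum convergence precise (an Ehrhart-type count of lattice points weighted by a piecewise-affine function) together with these error estimates is the technical heart; once the leading term is isolated, its identification with $n!\int_\Box\deg\bphi$ is immediate, and the lattice normalisation can be pinned down by matching a single instance such as the cubic surface of Example~\ref{exp:running}.
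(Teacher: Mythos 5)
The paper itself contains no proof of this statement---it is imported wholesale from \cite[Proposition 3.31]{tidiv}---so there is no internal argument to match you against; judged on its own merits, your proof is correct and is the natural first-principles one. You also silently resolved the paper's notational clash the right way: in the theorem $n=\dim X$ and $\dim\Box=n-1$ (as Example~\ref{exp:cox-degree} confirms, with $n=2$ and $\Box=[-1,3]$), even though Section~\ref{sec:t-vars} writes $T\cong(\CC^*)^n$. The chain you propose---asymptotic Riemann--Roch for the ample Cartier class $-K_X$ (valid on the normal, possibly singular $X$ via Snapper polynomiality of $\chi$ plus Serre vanishing), the decomposition \eqref{eq:sections}, $h^0(\PP^1,\CO(d))=\max(0,d+1)$, a uniform $O(1)$ rounding error since $\supp\bphi$ is finite, and Riemann-sum convergence for the continuous piecewise affine $\deg\bphi$---closes without surprises. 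Two tightenings are worth recording. First, the facet dichotomy you flag as the technical heart in fact evaporates: condition~(iii) plus continuity give $\deg\bphi\geq 0$ on all of $\Box$, so if $d:=\deg\lfloor k\bphi(u)\rfloor<0$ then $0\leq k\deg\bphi(u)\leq d+C<C$, whence the per-point bound $\left|\max(0,d+1)-k\deg\bphi(u)\right|\leq C+1$ holds uniformly in $u$ and $k$; no pointwise ``for each interior $u$ and large $k$'' statement is needed, and the accumulated error is honestly $O(k^{n-1})$. Second, the lattice normalisation is not something to pin down against an example: the Riemann sum over $\frac{1}{k}M\cap\Box$ converges by definition to the integral against the $M$-normalised Lebesgue measure, which is what the formula means; an example can only sanity-check this, not fix it. Finally, a route closer to the spirit of this paper: the section-counting identity displayed after \eqref{eq:futaki-general-fibre} shows that $X$ and the toric fibre $\X_0$ attached to $\Delta_y$ from \eqref{eq:special-fibre} have equal Hilbert functions, so $(-K_X)^n=n!\vol(\Delta_y)$ by the standard toric degree formula; since the fibre of $\Delta_y$ over $u\in\Box$ is an interval of length $2+\deg\Phi(u)=\deg\bphi(u)$, Fubini gives $\vol(\Delta_y)=\int_\Box\deg\bphi$. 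This outsources your Ehrhart bookkeeping to the known toric case, at the cost of the same floor-function care hidden in that displayed computation.
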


\bigskip

For a normal complete variety $X$ the Cox ring is defined (as a graded vector space) by
\[
\Cox(X) = \bigoplus_{[D] \in \Cl(X)} H^0(X, \CO(D)).
\]
It can be equipped with a natural multiplication map making it into a $\Cl(X)$-graded $\CC$-algebra, see \cite{zbMATH05518639} for details.

We sketch how to obtain generators and relations of the Cox ring of $X$ from the corresponding divisorial polytope. For this we need to introduce some notation. 

Let $\Vert_y$ be the set of facets of the graph of $\Phi_y$ and \[\Vert = \coprod_{y \in \PP^1} \Vert_y.\] Recall from condition~(\ref{item:distance-vertical}) above, that for $F \in \Vert$ the facet $F$ is the graph of an affine linear function of the following form
\[\frac{\langle v_F, \cdot \rangle-\mu_F+1}{\mu_F},\]
defined on a subset of $\Box$. By $\Hor$ we denote be the set of facets $G$ of $\Box$ such that $\deg \Phi|_G \not \equiv -2$.

\begin{theorem}[{\cite[Thm.~40]{t-survey}}]
\label{thm:cox-ring}
  \[\Cox(X)=\frac{\CC\left[T_{F},S_G \mid F \in \Vert, G \in \Hor\right]}{\langle T^{\mu(0)} + cT^{\mu(\infty)} + T^{\mu(c)} \mid c \in \CC^* \rangle},\]
where \(T^{\mu(y)}:=\prod_{F \in \Vert_y} T_{F}^{\mu_F}.\)
\end{theorem}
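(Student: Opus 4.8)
The plan is to follow the general recipe for Cox rings of complexity-one $T$-varieties: first produce a set of homogeneous generators coming from the $T$-invariant prime divisors, and then extract the relations from the geometry of the base $\PP^1$. The starting point is the standard fact that, for a normal complete variety with finitely generated divisor class group, $\Cox(X)$ is generated by the canonical sections $\mathbf 1_D$ of those prime divisors fixed by the acting torus. Under the divisorial-polytope dictionary these invariant prime divisors fall into exactly two families: the \emph{vertical} divisors $D_F$ contained in the fibre of the quotient map $\pi\colon X\to\PP^1$ over a point $y$, one for each facet $F\in\Vert_y$ of the graph of $\Phi_y$, and the \emph{horizontal} divisors $D_G$ dominating $\PP^1$, one for each facet $G\in\Hor$ of $\Box$. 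I would set $T_F:=\mathbf 1_{D_F}$ and $S_G:=\mathbf 1_{D_G}$ and record their $\Cl(X)$-degrees together with the refined $M$-weight read off from the primitive vectors $v_F$ and from the facet normals of the $G$. Establishing that these are generators amounts to showing that every $T$-homogeneous section of every invariant divisor is, modulo the relations below, a monomial in the $T_F$ and $S_G$; here one uses the explicit description of sections in the spirit of \eqref{eq:sections}, with $-\ell K_X$ replaced by an arbitrary invariant divisor $\sum a_F D_F+\sum b_G D_G$, so that each graded piece takes the form $\bigoplus_u H^0(\PP^1,\CO(\lfloor\psi_{a,b}(u)\rfloor))$ for a suitable piecewise affine function $\psi_{a,b}$ of $u$.

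The relations should come entirely from linear equivalences of points on $\PP^1$. The key computation is that the pullback $\pi^*[y]$ of a point $y\in\PP^1$ equals the vertical divisor $\sum_{F\in\Vert_y}\mu_F D_F$, the multiplicities $\mu_F$ being precisely the denominators appearing in condition~(\ref{item:distance-vertical}); consequently the canonical section of $\pi^*[y]$ is the monomial $T^{\mu(y)}=\prod_{F\in\Vert_y}T_F^{\mu_F}$, and all such monomials lie in the single $\Cl(X)$-degree $[\pi^*\CO_{\PP^1}(1)]$. Since $H^0(\PP^1,\CO(1))$ is two-dimensional, any three of its sections are linearly dependent; after normalising homogeneous coordinates on $\PP^1$ so that two marked points sit at $0$ and $\infty$, the section vanishing at a third point $c$ satisfies an identity of the shape $\mathbf 1_{[0]}+c\,\mathbf 1_{[\infty]}+\mathbf 1_{[c]}=0$ in $H^0(\PP^1,\CO(1))$. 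Transporting this through the isomorphism of graded pieces then yields exactly the trinomial $T^{\mu(0)}+c\,T^{\mu(\infty)}+T^{\mu(c)}$ for every $c\in\CC^*$. For the finitely many special values of $c$ this is a genuine relation among several variables, whereas for generic $c$ the fibre is irreducible (so $\Vert_c$ is a single facet with $\mu=1$) and the relation merely expresses the redundant generator $T^{\mu(c)}$ in terms of $T^{\mu(0)}$ and $T^{\mu(\infty)}$, which is precisely what keeps the final presentation finite.

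The remaining and hardest task will be to prove completeness: that the $T_F,S_G$ generate the whole ring and that the displayed trinomials generate the entire ideal of relations, with no further generators or syzygies hidden. I would attack this by a reduction to the toric situation over the generic point of $\PP^1$: restricting $\pi$ to the generic fibre turns $X$ into a toric variety whose Cox ring is the polynomial ring on the horizontal generators $S_G$ (the rays of the tail fan, equivalently the facets of $\Box$), which pins down the horizontal part and shows that no extra generators occur there. One then rebuilds the full ring along $\PP^1$, the vertical variables and the $\PP^1$-relations being the only new data, and checks that the natural surjection from the quotient ring onto $\Cox(X)$ is an isomorphism by comparing the $\Cl(X)$-graded Hilbert functions of both sides computed via the section formula. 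The main obstacle I anticipate is exactly this Hilbert-function (equivalently, flatness-over-$\PP^1$) bookkeeping: one must verify that imposing only the trinomials already cuts the dimension of each graded piece down to $\dim H^0(X,\CO(D))$, so that the trinomials are not merely relations but a complete generating set of them. An equivalent and perhaps cleaner route would be to recognise $\spec\Cox(X)$ as itself a complexity-one $T$-variety for the larger characteristic quasitorus and to read off its defining ideal directly from its own divisorial data.
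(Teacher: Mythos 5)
You should first note what you are comparing against: the paper gives no proof of this statement at all. It is imported verbatim, with citation, from \cite[Thm.~40]{t-survey}, and is originally a theorem of Hausen and S\"u\ss{} (\emph{The Cox ring of an algebraic variety with torus action}), so your sketch has to be measured against that proof. Your first two steps do reproduce its skeleton correctly. Generation by the canonical sections of the invariant prime divisors is not quite the ``standard fact'' you claim for general group actions, but in this setting it follows the right way: a $T$-homogeneous section $f$ of an invariant divisor $D$ has $\divisor(f)$ invariant, so $\divisor(f)+D$ decomposes into invariant primes, which here are exactly the vertical divisors $D_F$, $F\in\Vert_y$ over \emph{all} $y\in\PP^1$, and the horizontal $D_G$, $G\in\Hor$ --- consistent with the infinite presentation as stated (the trinomials for generic $c$ eliminating the redundant fibre variables, as you observe). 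One does need the lifting of the $T$-action to $\Cox(X)$ to decompose graded pieces into weight spaces, a point Hausen--S\"u\ss{} treat carefully and you elide. Likewise your derivation of the trinomials from $\pi^*[y]=\sum_{F\in\Vert_y}\mu_F D_F$ and $\dim H^0(\PP^1,\CO(1))=2$, with rescaling of variables normalising the coefficients to $(1,c,1)$, is the correct mechanism.

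The genuine gap is in your completeness step, and concretely in the proposed reduction to the generic fibre. It is false that the horizontal generators correspond to ``the rays of the tail fan, equivalently the facets of $\Box$'': by definition $\Hor$ consists only of those facets $G$ with $\deg\Phi|_G\not\equiv -2$, and the facets failing this condition carry \emph{no} horizontal divisor of $X$ (this is precisely why condition~(v) is imposed only on them). The paper's own running example defeats your step: for the cubic surface (no.~13, Example~\ref{exp:cox-degree}) one has $\Box=[-1,3]$ with $\deg\Phi(-1)=\deg\Phi(3)=-2$, so $\Hor=\emptyset$ and $\Cox(X)=\CC[T_{F_1},\dots,T_{F_4}]/\langle T_{F_1}T_{F_2}+T_{F_3}^4+T_{F_4}^2\rangle$ with no $S_G$ at all, while the generic fibre is a toric curve whose Cox ring is a polynomial ring in two variables; moreover there is no natural restriction map $\Cox(X)\to\Cox(\text{fibre})$ through which to run the comparison. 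With that crutch removed, the Hilbert-function bookkeeping you flag as the main obstacle \emph{is} the entire difficulty, and it is left unexecuted --- note also that as stated the presentation has infinitely many generators and relations, so one must first eliminate down to the finite presentation indexed by $\supp\Phi$ and then prove that the resulting trinomial complete intersection is integral and normal of the correct dimension, or, as Hausen--S\"u\ss{} actually argue, that it is factorially $\Cl(X)$-graded with only constant homogeneous units, a property characterising Cox rings. Your closing suggestion --- viewing $\spec\Cox(X)$ as a complexity-one variety for the characteristic quasitorus --- is the germ of a viable alternative, but as written it is a plan rather than a proof.
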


\begin{example}[Cubic surface -- continued]
\label{exp:cox-degree}
  Consider the divisorial polytope from Example~\ref{exp:running}. An elementary calculation shows that $\int_{-1}^3 (2+\deg \Phi(u))\; du = \nicefrac{3}{2}$. Hence, we verify by Theorem~\ref{thm:degree} that $X$ has Fano degree $3$.

Now, consider the Cox ring. For $\Phi_0$ the graph has two facets $F_1$ and $F_2$ corresponding to the affine linear pieces $u \mapsto -u$ and $u \mapsto 0$. This gives $\mu_{F_1} = \mu_{F_2}=1$ and $T^{\mu(0)} = T_{F_1}T_{F_2}$. For $\Phi_\infty$ there is a unique facet $F_3$ with associated affine linear form $u \mapsto \frac{u-3}{4}$. Hence, $\mu_{F_3}=4$ and $T^{\mu(\infty)}=T_{F_3}^4$ and similarly $T^{\mu(1)}=T_{F_4}^2$.
  
We obtain the following equation for the Cox ring.
\[\Cox(X)=\CC[T_{F_1},T_{F_2},T_{F_3},T_{F_4}]/\langle T_{F_1}T_{F_2} + T_{F_3}^4 + T_{F_4}^2 \rangle.\]
\end{example}

\medskip

By \cite[Theorem 4.3.]{is15} equivariant non-trivial special test configurations $\X$ of Gorenstein Fano T-varieties of complexity $1$ are given by the choice of $m \in \NN$, $v \in N$ and $y \in \PP^1$, such that $\Phi_z(0)$ is non-integral for at most one $z \neq y$. We call such a choice of $y$ \emph{admissible}. It easy to describe the toric special fibre $\X_0$ of such a test configuration. It corresponds to the polytope $\Delta_y \subset M_\RR \times \RR$ given by
\begin{equation}
\Delta_y = \Big\{(u,a) \in M_\RR \times \RR \; \Big| \; u \in \Box,\; -1-\sum_{z \neq y} \Phi_z(u) \leq a \leq 1+\Phi_y(u)\Big\}\label{eq:special-fibre}
\end{equation}

and the induced $\CC^*$-action on the special fibre $\X_0$ is given by the one-parameter subgroup of 
 $T' = T \times \CC^*$ corresponding to $v'=(-mv,m) \in N \times \ZZ$. We denote the corresponding test configuration by $\X = \X_{y,v,m}$ with special fibre $\X_0$.



As observed in \cite{is15} for the Futaki character of $(\X_0,\xi')$ one obtains
\begin{equation}
F_{\X_0, \xi'}(v') = \frac{1}{\vol \Delta_y}\left(\int_{\Delta_y} \langle u', v' \rangle \cdot e^{\langle u', \xi'\rangle} du'\right),\label{eq:futaki-character}
\end{equation}
with $\xi',v' \in N_\RR \times \RR$. On the other hand, for $v,\xi \in N_\RR$ one obtains
\begin{equation}
F_{X, \xi}(v) = F_{\X_0, (\xi,0)}((v,0))
= \frac{1}{\int_\Box \deg \bar \Phi(u) \,du }\left(\int_{\Box} \langle u, v \rangle \cdot \deg \bar \Phi(u) \cdot e^{\langle u, \xi \rangle}\, du\right)
, \label{eq:futaki-general-fibre}
\end{equation}
To see this consider the projection $M \times \ZZ \to M$ giving a coarsened grading on the sections of $(\X_0,\L_0)$. With this grading for $u \in M$ one has

\begin{align*}
\dim H^0(X,L^{\otimes k})_u &= \dim H^0(Y,\CO(\lfloor k\cdot \bar \Phi(u)\rfloor)) \\
&=  2 + \lfloor k\cdot \Phi_y(u) \rfloor + \sum_{z\neq y}  \lfloor k \cdot \Phi_z(u)\rfloor\\
&=  2 + \left\lfloor k\cdot \Phi_y(u) \right\rfloor + \left\lfloor \sum_{z\neq y}   k \cdot \Phi_z(u)\right\rfloor\\
&= \dim H^0(\X_0,\L_0^{\otimes k})_u.
\end{align*}
Since the definition of $F_{X, \xi}$ in (\ref{eq:3}) does only depend on these dimension counts 
and the values $e^{\langle u, \xi \rangle} = e^{\langle (u,a), (\xi,0) \rangle}$  the first equality in (\ref{eq:futaki-general-fibre}) follows. For the second equality note, that for $\xi'=(\xi,0)$ and $v'=(v,0)$ the integrand in (\ref{eq:futaki-character}) does not depend on the second factor of $M_\RR \times \RR$. Hence, integrating along this factor first gives just the height of $\Delta_y$ at $u$, which is exactly $\deg \bar \Phi(u)$. Hence, one obtains the integral in (\ref{eq:futaki-general-fibre}).

Now, for a pair $(X,\xi)$ to be equivariantly K-stable the Futaki invariant $F_{X, \xi}(v)$ has to vanish for every choice $v$ by the condition for product test configurations. With exactly the same arguments as in \cite[Section~3.1]{zbMATH05366644} we may see that there always exist a unique choice $\xi \in N_\RR$ for which $F_{X, \xi}$ is trivial. We call this $\xi$ a \emph{soliton canditate}. To see whether with this candidate $(X,\xi)$ is indeed equivariantly K-stable it remains to check positivity of 
\[\DF_\xi(\X_{y,0,1})=F_{\X_0, (\xi,0)}((0,1)) = \frac{1}{\vol \Delta_y}\int_{\Delta_y} \langle u', (0,1) \rangle \cdot e^{\langle u', (\xi,0)\rangle} \, du'\]
for every admissible choice of $y$. Note, that $\Delta_y = \Delta_{y'}$ for $y,y' \notin \supp \Phi$. This leaves us with a finite number of integrals to check. However, in general we cannot hope to find an exact solution for $\xi$. We have to deal with sufficiently good approximations, instead.

 \begin{remark}
\label{rem:no-specials}
   As a consequence of the above classification of non-product
   $T$-equivariant special test configurations we see that if there
   are at least $3$ points $y \in \PP^1$ with $\Phi_y(0)$ being
   non-integral then such a test configuration does not exists due to the lack of an admissible choice for $y \in \PP^1$. Hence, we obtain equivariant K-stability for the soliton candidate for free. 
 \end{remark}

\begin{example}[Cubic surface -- continued]
\label{exp:calculations}
 Recall the piecewise linear function $\Phi:[-1,3] \to \wdiv_\RR(\PP^1)$ from Example~\ref{exp:running} given by
\[\Phi_0(u) = \min \{-u,0\},\; \Phi_\infty(u) = \frac{u-3}{4},\; \Phi_1(u) = \frac{u-1}{2}.\]
Now, with the choice of $y=\infty$ the construction from \cite{is15} gives a test configuration with special fibre corresponding to the polytope $\Delta_\infty = \conv((-1,0),(0,-\nicefrac{1}{2}),(3,1))$ with induced $\CC^*$-action given by the one-parameter subgroup $(0,1) \in N \times \ZZ = \ZZ \times \ZZ$.

\begin{figure}[h]
  \centering
 \begin{tikzpicture}[scale=0.5]
    \draw[dotted,step=1,gray] (-1,-1) grid (3,1); \draw (-1,0) --
    (3,1) -- (0,-0.5)--(-1,0);
  \end{tikzpicture}
\caption*{The polytope $\Delta_\infty$}
\end{figure}
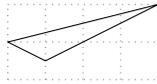

This degeneration can be realised in $\PP(1,1,1,1,2)$ by the equation
\[
\alpha \cdot x_4+x_1^2+x_0x_2,\; x_2^3 + x_3x_4,
\] 
where $\alpha \in \CC$  is the parameter of the degeneration. Note, that for $\alpha \neq 0$ we may eliminate $x_4$ and get the equation from Example~\ref{exp:running} (up to scaling of variables). This degeneration is induced by the action of $\CC^*$ with weights $(0,0,0,-1,1)$ on 
\[ V(x_4+x_1^2+x_0x_2,\; x_2^3 + x_3x_4) \subset \PP(1,1,1,1,2).\]

To check for the existence of a vector field $\xi \in N_\RR$, such that $(X,\xi)$ is K-stable, we first have to determine the unique candidate with sufficient precision. Hence, we have to approximate a solution of 

\begin{equation*}
0=f(\xi) := \vol \Delta_\infty \cdot  F_{X, \xi}(1) = \int_{\Delta_\infty} u_1 \cdot e^{\xi \cdot u_1} \; du_1du_2.
\end{equation*}

Solving the integral leads to
\begin{equation}
0=f(\xi)=-\nicefrac{3}{4}(\xi + 2)e^{-\xi} + \nicefrac{1}{4}((3\xi - 2)e^{3\xi} + 8).\label{eq:Fexample}
\end{equation}

When evaluating the exponential functions occuring in (\ref{eq:Fexample}) with an guaranteed precision of 11 binary digits at the values $-1.247$ and $-1.246$, it can be shown by elementary estimations (e.g. interval arithmetic) that $f(-1.247)<0$ and $f(-1.246)>0$.  The intermediate value theorem then implies $-1.247< \xi < -1.246$ for the solution $\xi$ of (\ref{eq:Fexample}).

Now it remains to check the sign of the integral
\[
\int_{\Delta_y} u_2 \cdot e^{\xi\cdot u_1} \; du_1du_2 \quad = \quad \frac{1}{16\xi^3}((4\xi - 3)e^{3\xi} + 8\xi + 3e^{-\xi}).\]
Using the same approximations as above for the exponential functions evaluated at the lower and upper bounds for $\xi$ gives an estimate 
\[-0.012 \leq\int_{\Delta_y} u_2 \cdot e^{\xi\cdot u_1} \leq -0.005.\]
In any case this shows that the Donaldson-Futaki invariant is negative and the pair $(X,\xi)$ gets destabilised.
\end{example}

\begin{remark}
Although it is in principle possible to do the calculations/estimates in Example~\ref{exp:calculations} by hand it becomes quite tedious. Hence,  we used interval arithmetic library MPFI \cite{mpfi} via the SageMath \cite{sage} computer algebra system to verify K-stability for our example, see Appendix~\ref{sec:sagemath-code-surface}. 
\end{remark}

\begin{remark}
  As for Example~\ref{exp:calculations} the standard integrals appearing in (\ref{eq:futaki-character}) and (\ref{eq:futaki-general-fibre}) can be solved analytically in general, either by elementary methods or by using Stoke's theorem to reduce to similar integrals along the boundary facets and by iterating this process eventually obtaining a formula which involves evaluations of exponential functions in the vertices of the polytope and rational functions, see \cite[Lemma 1]{zbMATH00525648}. 
\end{remark}

\section{Classification}
\label{sec:classification}
In this section we are considering all Gorenstein del Pezzo surfaces, which admit a non-trivial $\CC^*$-action. We give a list of the corresponding combinatorial data in Table~\ref{longtable}. For every del Pezzo surface we state the closed interval $\Box$, the functions $\Phi_y\colon \Box \to \RR$ with $y \in \supp \Phi$. One can check that these data fulfil the conditions (i)-(iv) from above and, hence, define Gorenstein del Pezzo surfaces with $\CC^*$-action. Note, that for all considered surfaces the support of $\Phi$ contains either $3$ or $4$ points. Hence, with the appropriate choice of coordinates on $\PP^1$ we may assume that the support consists of the points $0$, $\infty$, $1$ and possibly a fourth point $c$. If the support consists of $4$ points the combinatorial data gives rise to a 1-parameter family of de Pezzo surfaces parametrised by $c \in \PP^1 \setminus \{0,1,\infty\}$.

A classification of Gorenstein del Pezzo surfaces with $\CC^*$-action in terms of their Cox-rings is available from \cite[Section~5.3]{elaine}.  One the other hand, Theorem~\ref{thm:cox-ring} and Theorem~\ref{thm:degree} allow us to compare the Cox rings and Fano degrees of the del Pezzo surfaces in Table~\ref{longtable} with those in \cite{elaine}. Doing so we see that the list below is complete and hence provides the combinatorial description of all Gorenstein del Pezzo surfaces with $\CC^*$-action. Recently, the same classification was obtained in \cite{IMC} directly in terms of divisorial polytopes.
{\small
\setlength\LTleft{-1.4cm}
\begin{longtable}{=r+c+c+l+l+c+c+c}
\caption{Gorenstein del Pezzo surfaces with $\CC^*$-action}
\label{longtable}
\endfirsthead
\endhead
\toprule
   No.                      & K-stab   & $\xi$        & $\Box$   & $\Phi_{0},\Phi_{\infty},\Phi_1, (\Phi_{c})$                                       & $(K_X)^2$ & Sing.     & $\rho$ \\  \midrule
  \rowstyle{\color{gray}} 1 & \cmark  & $0$          & $[-1,1]$ & $\min \{-2u,-u\},\ \frac{u-1}{2},\ \frac{u-1}{2},\ \frac{u-1}{2} $                & $1$       & $2D_4$    & $1$    \\
   2                        & \cmark  & $-1.99761$   & $[-1,5]$ & $\min \{\frac{u-4}{5},0\} ,\ \frac{u-2}{3},\  \frac{-u-1}{2} $                    & $1$       & $E_8$     & $1$    \\
   3                        & \cmark  & $ -1.94024$  & $[-1,3]$ & $\min \{\frac{u-2}{3},0\}  ,\ \frac{u-3}{4},\  \frac{-u-1}{2} $                   & $1$       & $E_7A_1$  & $1$    \\
   4                        & \cmark  & $ -1.69131$  & $[-1,2]$ & $\min \{\frac{u-1}{2},0\}  ,\ \frac{u-2}{3},\  \frac{-2u-2}{3} $                  & $1$       & $E_6A_2$  & $1$    \\
  \midrule                                                        
  \rowstyle{\color{gray}} 5 & \cmark  & $0$          & $[-1,1]$ & $\min \{-2u,0\}  ,\ \frac{u-1}{2}   ,\  \frac{u-1}{2}$                            & $2$       & $2A_3A_1$ & $1$    \\
   6                        & \cmark  & $ -0.97052$  & $[-1,2]$ & $\min \{-u,0\}  ,\ \frac{u-2}{3}   ,\  \frac{u-2}{3}$                             & $2$       & $A_5A_2$  & $1$    \\
   7                        & \cmark  & $ -1.79675$  & $[-1,3]$ & $\min \{\frac{u-1}{2},0\}  ,\ \frac{u-3}{4},\  \frac{-u-1}{2}$                    & $2$       & $D_6A_1$  & $1$    \\
   8                        & \cmark  & $ -1.99186$  & $[-1,5]$ & $\min \{\frac{u-3}{4},0\}  ,\ \frac{u-2}{3},\  \frac{-u-1}{2}$                    & $2$       & $E_7$     & $1$    \\
   9                        & \cmark  & $ -1.94024$  & $[-1,3]$ & $\min \{\frac{u-2}{3},0\}  , \   \min \{\frac{u-2}{3},0\}  , \   \frac{-u-1}{2} $ & $2$       & $E_6$     & $2$    \\
                                                                  
\rowstyle{\color{gray}} 10  & \cmark  & $0$          & $[-1,1]$ & $\min \{-u,0\}  , \   \min \{-u,0\}  , \   \frac{u-1}{2} ,\   \frac{u-1}{2} $        & $2$ & $2A_3$    & $2$ \\    
 11                         & \cmark  & $ -1.69131$  & $[-1,2]$ & $\min \{\frac{u-1}{2},0\}   , \   \min \{\frac{u-1}{2},0\}   , \   \frac{-2u-2}{3} $ & $2$ & $D_5A_1$  & $2$ \\
 12                         & \cmark  & $ -1.34399$  & $[-1,1]$ & $\frac{u-1}{2},\ \frac{u-1}{2},\ \frac{-u-1}{2}$                                     & $2$ & $D_43A_1$ & $1$ \\
\midrule                                                          
 13                         & \xmark & $ -1.24607$  & $[-1,3]$ & $\min \{-u,0\}  ,\ \frac{u-3}{4}   ,\  \frac{u-1}{2} $                               & $3$ & $A_5A_1$  & $1$ \\
 14                         & \cmark  & $ -1.96766$  & $[-1,5]$ & $\min \{\frac{u-2}{3},0\} ,\ \frac{u-2}{3},\  \frac{-u-1}{2}$                        & $3$ & $E_6$     & $1$ \\
 15                         & \cmark  & $ -1.19618$  & $[-1,2]$ & $\min\{\frac{u-1}{2},0\}   , \   \min\{-u,0\}   , \   \frac{u-2}{3} $                & $3$ & $A_4A_1$  & $2$ \\
\rowstyle{\color{gray}} 16  & \xmark & $0$          & $[-1,1]$ & $\min\{-u,u\}  , \   \min\{-u,0\}   , \   \min \frac{u-1}{2} $                       & $3$ & $2A_2A_1$ & $2$ \\
 17                         & \cmark  & $ -1.83879$  & $[-1,3]$ & $\min \{\frac{u-2}{3},0\}   , \   \min\{\frac{u-1}{2},0\}   , \   \frac{-u-1}{2}$    & $3$ & $D_5$     & $2$ \\
\rowstyle{\color{gray}} 18  & \xmark & $0$          & $[-1,1]$ & $\min \{0,u\},\ \min\{-u,0\},\ \min \{-u,0\},\ \frac{u-1}{2}$                        & $3$ & $2A_2$    & $3$ \\
 19                         & \cmark  & $ -1.69131$  & $[-1,2]$ & $\min \{-u,\frac{-u-1}{2}\},\ \min\{0,\frac{u-1}{2}\},\ \min\{0,\frac{u-1}{2}\}$     & $3$ & $D_4$     & $3$ \\
 20                         & \cmark  & $ -0.94468$  & $[-1,1]$ & $\min\{0,u\},\ \frac{u-1}{2},\ \frac{-u-1}{2}$                                       & $3$ & $A_32A_1$ & $2$ \\
\midrule                                                          
 21                         & \cmark  & $ -1.85969$  & $[-1,5]$ & $\min \{\frac{u-1}{2},0\}  ,\ \frac{u-2}{3},\  \frac{-u-1}{2} $                      & $4$ & $D_5$     & $1$ \\
 22                         & \xmark & $ -0.97052$  & $[-1,2]$ & $\min\{0,u\}  , \   \min\{0,u\}   , \   \frac{-2u-2}{3}$                             & $4$ & $A_3A_1$  & $2$ \\
 23                         & \cmark  & $ -1.79675$  & $[-1,3]$ & $\min\{\frac{u-1}{2},0\}   , \   \min\{\frac{u-1}{2},0\}   , \   \frac{-u-1}{2}$     & $4$ & $D_4$     & $2$ \\
 24                         & \cmark  & $ -1.38176$  & $[-1,3]$ & $\min \{\frac{u-2}{3},0\}  , \   \min \{u,0\}   , \   \frac{-u-1}{2} $               & $4$ & $A_4$     & $2$ \\
 \rowstyle{\color{gray}} 25 & \xmark & $0$          & $[-1,1]$ & $\min \{0,2u\},\ \min \{-u,0\},\ \min\{-u,0\}$                                       & $4$ & $3A_1$    & $3$ \\
 26                         & \cmark  & $ -1.31047$  & $[-1,2]$ & $\min \{-u,0\},\ \min \{0,\frac{u-1}{2}\},\ \min \{0,\frac{u-1}{2}\}$                & $4$ & $A_3$     & $3$ \\
\rowstyle{\color{gray}} 27  & \cmark  & $0$          & $[-1,1]$ & $\min \{0,u\},\  \min \{0,u\},\ \min \{-u,0\},\ \min \{-u,0\}$                       & $4$ & $2A_1$    & $4$ \\
 28                         & \xmark & $  -0.74373$ & $[-1,1]$ & $\min\{0,u\},\ \min\{0,u\},\ \frac{-u-1}{2}$                                         & $4$ & $A_2A_1$  & $3$ \\
\midrule                                                          
 29                         & \cmark  & $ -1.42059$  & $[-1,5]$ & $\min \{-u,0\}  ,\ \frac{u-2}{3}  ,\  \frac{u-1}{2} $                                & $5$ & $A_4$     & $1$ \\
 30                         & \cmark  & $ -1.43886$  & $[-1,3]$ & $\min \{\frac{u-1}{2},0\}  , \   \min\{u,0\}   , \   \frac{-u-1}{2}$                 & $5$ & $A_3$     & $2$ \\
 31                         & \xmark & $ -1.10613$  & $[-1,2]$ & $\min\{0,u\},\ \min\{0,u\},\ \min\{0,\frac{-u-1}{2}\}$                               & $5$ & $A_2$     & $3$ \\
 32                         & \cmark  & $ -0.61790$  & $[-1,1]$ & $\min \{0,u\},\ \min \{0,u\},\ \min \{-u,0\}$                                        & $5$ & $A_1$     & $4$ \\
\midrule                                                          
 33                         & \xmark & $ -1.24607$  & $[-1,3]$ & $\min\{0,u\} , \   \min\{0,u\}   , \   \frac{-u-1}{2} $                              & $6$ & $A_2$     & $2$ \\
 34                         & \cmark  & $ -0.97052$  & $[-1,2]$ & $\min \{-u,0\},\ \min \{0,u\},\ \min \{0,u\}$                                        & $6$ & $A_1$     & $3$ \\\bottomrule  
\end{longtable}}

\begin{example}[Cubic surface -- continued]
  Consider once again the cubic surface from the Examples~\ref{exp:running} and \ref{exp:cox-degree}. From Example~\ref{exp:cox-degree} we know that the Fano degree equals $3$ and the Cox ring is isomorphic to $\CC[T_{1},T_{2},T_{3},T_{4}]/\langle T_{1}T_{2} + T_{3}^4 + T_{4}^2 \rangle$.
Comparing this with the data from Theorem~5.25 in \cite{elaine} we find that the corresponding surface has Picard rank $\rho=1$ and singularity type $A_5A_1$.
\end{example}

\begin{proof}[Proof of Theorem~\ref{thm:surface-solitons}]
\label{sec:proof-of-surface-thm}
  We are running through the classification given in Table~\ref{longtable}. First note, that the cases 1, 5, 10 and 27 are known to admit K\"ahler-Einstein metrics, hence are K-stable, see \cite[Section~6.1.2]{zbMATH06546438}. For the cases 16, 18 and 25 one also calculates  $F_{X,0} = 0$. Hence, we have $\xi=0$ for the soliton candidate. On the other hand, these surfaces are known to be not K\"ahler-Einstein, see  loc. cit.

  For the remaining cases we follow the approach outlined in Example~\ref{exp:calculations}. Hence, we apply the following steps:
  \begin{enumerate}
  \item Find a closed form for $F_{X,\xi}(1)$ in terms of exponential functions in $\xi$. This can be done  by solving the integral (here over an interval) appearing in (\ref{eq:futaki-general-fibre}) analytically using standard methods. \label{item:solve-f}
  \item Find sufficiently good bounds $\xi_-$ and $\xi_+$ with $\xi_- < \xi < \xi_+$ for a solution $\xi$ of $F_{X,\xi}(1)=0$. To show that the interval $(\xi_-, \xi_+)$ contains a solution we
calculate $F_{X,\xi_-}(1)$ and $F_{X,\xi_+}(1)$ with sufficient precision (i.e. we need to guarantee error bounds for the approximation of the exponential function) and then use intermediate value theorem.
\label{item:approx-vf}
\item For every admissible choice of $y \in \PP^1$ find a closed form for $\DF_\xi(\X_{y,0,1})$ in terms of exponential functions. For this we have to analytically solve the integral in (\ref{eq:futaki-character}) for $\xi'=(\xi,0)$ and $v'=(0,1)$. This comes down to solving
\begin{align*}
\int_{\Delta_y} u_2 e^{u_1\xi} du_1du_2 &= \int_{\Box} e^{u_1\xi} \int\displaylimits_{-1-\sum_{z \neq y}\Phi_z(u_1)}^{1+\Phi_y(u_1)} \hspace{-2em}u_2\; du_2 \;\;du_1\\ 
& = \int_{\Box} e^{u_1\xi} (1+\Phi_y(u_1))^2 du_1 - \int_{\Box} e^{u_1\xi} (1+\sum_{z\neq y} \Phi_z(u_1))^2 \;du_1.
\end{align*}
Here, the right hand side just involves standard integrals in one variable and can be solved by elementary methods.\label{item:solve-df}
\item Ultimatively we have to plug in the value of $\xi$ into the found closed form and check positivity. However, we have only estimates for $\xi$ and also for the evaluations of the exponential functions appearing in the closed form obtained in (\ref{item:solve-df}). Hence, we need to use elementary estimations to ensure positivity for all values within the known error bounds.
\end{enumerate}
The complete calculations are done using SageMath and can be found in the ancillary files\cite{sage-code} and as an online worksheet\footnote{CoCalc:\url{https://cocalc.com/projects/ae8e1663-e2ad-40b8-aec2-30faf4e6a54f/files/surfaces.sagews}}. For an example which can be adopted to the other cases see also Appendix~\ref{sec:sagemath-code-surface}.

  Note, that as in Remark~\ref{rem:no-specials} for the cases no.~1, 2, 3, 4, 7, 8 , 9, 11, 12, 14, 17, 18, 19, 21 and 23 we obtain the existence of a K-stable pair $(X,\xi)$ without any calculation, since in these cases there is no admissible choice of $y$. However, to obtain an approximation for $\xi$ we still have to do the calculations in (\ref{item:solve-f}) and (\ref{item:approx-vf}). In particular, in all cases with non-trivial candidate vector fields $\xi$ we obtain a K-stable pair with the exception of nos. 13, 22, 28, 31 and 33. 
\end{proof}

The cases no. 13, 22, 28, 31, 33 do not admit a K-stable pair and, hence, no K\"ahler-Ricci soliton. We provide a description of the destabilising test configurations in Table~\ref{destab}. Indeed, from the calculations one obtains as destabilising test configurations $\X_{\infty,0,1}$ for no. 13 and $\X_{1,0,1}$ for all other cases. The description of the special fibre from (\ref{eq:special-fibre}) immediately provides the last column of the table. To obtain the equations and the ambient space we refer to the explicit construction of the test configuration given in \cite[Section~4.1]{is15}. The third column states the weights for the $\CC^*$-action on the ambient space, which induces the $\CC^*$-action on the total space of the test configuration.

\setlength\LTleft{-1.4cm}
\begin{longtable}{rlccl}
\caption{Destabilising test configurations}
\label{destab}
\endfirsthead
\endhead
\toprule
No.&destabilising degeneration&amb. space&weights&special fiber\\
\midrule
13&$\alpha \cdot x_4+x_1^2+x_0x_2,\; x_2^3 + x_3x_4$&$\PP(11112)$&$(0,0,0,-1,1)$&
  \raisebox{-0.3cm}{\begin{tikzpicture}[scale=0.4]
    \draw[dotted,step=1,gray] (-1,-1) grid (3,1);
    \draw (-1,0) -- (3,1) -- (0,-0.5)--(-1,0);
  \end{tikzpicture}}
\\
\midrule
22&$\alpha \cdot x_2x_3 + x_0x_1-x_2^2,x_3x_4-x_1^2$&$\PP^4$&$(0,0,0,-1,1)$&
  \raisebox{-0.3cm}{\begin{tikzpicture}[scale=0.4]
    \draw[dotted,step=1,gray] (-1,-1) grid (3,1);
    \draw (-1,1) -- (0,-1) -- (2,-1)--(-1,1);
  \end{tikzpicture}}
\\
\midrule
28&$\alpha \cdot x_2x_3 + x_0x_1-x_2^2,x_3x_4-x_1x_2$&$\PP^4$&$(1,-1,0,-1,0)$&
 \raisebox{-0.3cm}{\begin{tikzpicture}[scale=0.4]
    \draw[dotted,step=1,gray] (-1,-1) grid (3,1);
    \draw (-1,1) -- (0,-1) -- (1,-1)--(1,0) -- (-1,1);
  \end{tikzpicture}}
\\
\midrule
31&
$\begin{array}{l}
  \alpha \cdot x_3x_5 + x_3^2-x_0x_4,\\
  \alpha \cdot x_3x_4 + x_0x_2-x_1x_3,\\ 
  x_2x_3-x_1x_4,x_3x_4-x_1x_5, x_4^2-x_2x_5
\end{array}$&$\PP^5$&$(0,1,1,0,0,-1)$&
 \raisebox{-0.3cm}{\begin{tikzpicture}[scale=0.4]
    \draw[dotted,step=1,gray] (-1,-1) grid (3,1);
    \draw (-1,1) -- (0,-1) -- (2,-1)--(1,0) -- (-1,1);
  \end{tikzpicture}}
\\
\midrule
33& 
$\begin{array}{l}
\alpha\cdot x_3x_4+x_3^2-x_0x_6,\\
\alpha\cdot x_2x_3+x_1x_3-x_0x_5,\\
\alpha\cdot x_1x_2+ x_1^2-x_0x_4\\
x_5^2-x_4x_6, x_4x_5-x_2x_6, x_3x_5-x_1x_6,\\
x_4^2-x_2x_5, x_3x_4-x_1x_5, x_2x_3-x_1x_4
\end{array}
$
&$\PP^6$&$(1,0,-1,0,-1,-1,-1)$ & 
\raisebox{-0.3cm}{\begin{tikzpicture}[scale=0.4]
    \draw[dotted,step=1,gray] (-1,-1) grid (3,1);
    \draw (-1,1) -- (0,-1) -- (3,-1)-- (-1,1);
  \end{tikzpicture}}
\\
\bottomrule  
\end{longtable}

\begin{remark}
 Note, that some of the K-unstable examples seem to be closely related to each other. Indeed, no.~18, 28, 31 are (weighted) blowups of of no.~33 and no.~13 is a quotient of 33 by $\ZZ/2\ZZ$. Surface no.~16 lies at the boundary of the family of del Pezzo surfaces of type 18.
\end{remark}

\section{New K\"ahler-Ricci solitons on Fano threefolds}
\label{sec:threefolds}
In this section we consider Fano threefolds admitting an effective \(2\)-torus action within the classification of \cite{mori-mukai}. In \cite{tfano} a not necessarily complete list of such threefolds together with their combinatorial description was given. We use the methods described above to extend the results of \cite{is15}, providing new examples of threefolds  admitting a non-trivial K\"ahler-Ricci soliton.

 For this we use the same approach as in the proof of Theorem~\ref{thm:surface-solitons}. We determine a small region in $N_\RR$ which has to contain the soliton candidate $\xi$. Then we use these bounds for $\xi$ to show positivity of the Donaldson-Futaki invariants $\DF_\xi(\X_{y,0,1})$ for every admissible choice of $y$.  However, now $\xi$ is two-dimensional and we cannot use the intermediate value theorem directly to bound the solution for $\xi$. In some cases we can make use of additional symmetries to reduce to a one-dimensional problem. Here, the key observation is that given an automorphism \(\sigma \in \GL(M)\) permuting the vertices of \(\Box\) such that \(\deg (\Phi \circ \sigma) = \deg \Phi\), by (\ref{eq:futaki-general-fibre}) we have $F_{X, \sigma^{\!*}\!(\xi)}  =  F_{X, \xi}\circ \sigma^*$.
Since \(\xi \in N_\mathbb{R}\) is the unique solution to \(F_{X,\xi} = 0\), this gives \(\xi \in N_{\mathbb{R}}^{\sigma^*}\). Now we show how to utilise this observation.
\begin{example}[2.30 -- Blow up of quadric threefold in a point]
\label{exp:threefold}
Consider the threefold 2.30. The combinatorial data for this threefold was given in\cite{tfano}, although the piecewise affine \(\Psi\) discussed there is \(\bar{\Phi}\) as denoted in Section 2, with \(D = 2 \cdot \{ \infty \}\). The function \(\Phi\) is given in Figure~\ref{fig:data230}.
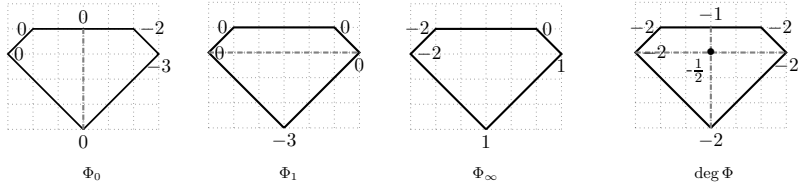
\begin{figure}[h]
\caption{The combinatorial data for threefold 2.30}
\label{fig:data230}
\resizebox{0.90\linewidth}{!}{
\begin{subfigure}[b]{0.30\textwidth}
\centering
  \begin{tikzpicture}[scale=0.5]
   	 \draw[dotted,step=1,gray,] (-3,-3) grid (3,2); \draw[line width = 1pt] (0,-3) --
     (-3,0) -- (-2,1)--(2,1)--
 	 (3,0)--(0,-3); \draw[densely dashdotted, gray, line width = 1.2pt] (0,-3) -- (0,1);
 	 \node at (0,-3) [below] {\large{$0$}};
 	 \node at (-3,0) [right] {\large{$0$}};
 	 \node at (-2,1) [left] {\large{$0$}};
 	 \node at (2,1) [right] {\large{$-2$}};
 	 \node at (3,0) [below] {\large{$-3$}};
 	 \node at (0,1) [above] {\large{$0$}};
	\end{tikzpicture}
	\caption*{$\Phi_0$}
\end{subfigure}
\begin{subfigure}[b]{0.30\textwidth}
	\centering
 	 \begin{tikzpicture}[scale=0.5]
 	   \draw[dotted,step=1,gray] (-3,-3) grid (3,2); \draw[line width = 1.2pt] (0,-3) --
 	   (-3,0) -- (-2,1)--(2,1)--
 	   (3,0)--(0,-3); \draw[densely dashdotted, gray,line width = 1.2pt] (-3,0) -- (3,0);
 	 \node at (0,-3) [below] {\large{$-3$}};
 	 \node at (-3,0) [right] {\large{$0$}};
 	 \node at (-2,1) [left] {\large{$0$}};
 	 \node at (2,1) [right] {\large{$0$}};
 	 \node at (3,0) [below] {\large{$0$}};
	\end{tikzpicture}
	\caption*{$\Phi_1$}
\end{subfigure}
\begin{subfigure}[b]{0.30\textwidth}
	\centering
 	 \begin{tikzpicture}[scale=0.5]
 	   \draw[dotted,step=1,gray] (-3,-3) grid (3,2); \draw[line width = 1.2pt] (0,-3) --
 	   (-3,0) -- (-2,1)--(2,1)--
 	   (3,0)--(0,-3);
 	 \node at (0,-3) [below] {\large{$1$}};
 	 \node at (-3,0) [right] {\large{$-2$}};
 	 \node at (-2,1) [left] {\large{$-2$}};
 	 \node at (2,1) [right] {\large{$0$}};
 	 \node at (3,0) [below] {\large{$1$}};
	\end{tikzpicture}
	\caption*{$\Phi_\infty$}
\end{subfigure}
\begin{subfigure}[b]{0.40\textwidth}
	\centering
 	 \begin{tikzpicture}[scale=0.5]
 	   \draw[dotted,step=1,gray] (-3,-3) grid (3,2); \draw[line width = 1.2pt] (0,-3) --
 	   (-3,0) -- (-2,1)--(2,1)--
 	   (3,0)--(0,-3); \draw[densely dashdotted, gray,line width = 1.2pt] (-3,0) -- (3,0); \draw[densely dashdotted, gray,line width = 1.2pt] (0,-3) -- (0,1) ;
 	 \node at (0,-3) [below] {\large{$-2$}};
 	 \node at (-3,0) [right] {\large{$-2$}};
 	 \node at (-2,1) [left] {\large{$-2$}};
 	 \node at (2,1) [right] {\large{$-2$}};
 	 \node at (3,0) [below] {\large{$-2$}};
 	 \node at (0,0) [below left] {\large{-$\frac{1}{2}$}};
 	 \node at (0,1) [above] {\large{$-1$}};
 	 \draw (0,0) node {\textbullet};
	\end{tikzpicture}
	\caption*{$\deg \Phi $}
\end{subfigure}
}
\end{figure}
We now find the unique candidate vector field \(\xi \in N_\mathbb{R}\) for a \(K\)-stable pair \((X,\xi)\). We see that $\deg \Phi$ is symmetric with respect to reflection $\sigma$ along the vertical axis. Hence, we have  \(\xi = \xi_2e_2\) for some \(\xi_2 \in \mathbb{R}\) and must find a solution $\xi_2$ to $F_{X,\xi_2e_2}=0$, which is equivalent to $F_{X,\xi_2e_2}(e_2)=0$. Indeed, we have 
\[F_{X,\xi_2e_2}(e_1) = F_{X,\sigma^*\xi_2e_2}(\sigma^*e_1)= F_{X,\xi_2e_2}(-e_1)= -F_{X,\xi_2e_2}(e_1).\]
Hence, $F_{X,\xi_2e_2}(e_1)=0$ and the claim follows by linearity.

By (\ref{eq:futaki-character}) the vanishing of $F_{X,\xi_2e_2}(e_2)$ is equivalent to that of
\[0=g(\xi_2) := \int_{\Box} u_2 \cdot \deg \bar \Phi(u) \cdot e^{u_2 \xi_2}\, du = \int_{\Delta_0} u_2 \cdot e^{u_2 \xi_2}\, du.\] Where the integral on the right hand side can be solved analytically. We obtain
\[
\frac{1}{\xi_{2}^{4}}\cdot\left({\left(2 \, \xi_{2}^{3} - 3 \, \xi_{2} - 3\right)} e^{\left(4 \, \xi_{2}\right)} + 12 \, \xi_{2} e^{\left(3 \, \xi_{2}\right)} + 3 \, \xi_{2} + 3\right) e^{\left(-3 \, \xi_{2}\right)}.
\]
Evaluating the exponential functions with a precision of 16 binary digits  and using elementary estimations it can be shown that \(g(0.514) <0\) and \(g(0.515)>0\). By the intermediate value theorem then \(0.514 < \xi_2 < 0.515\). It remains to check the positivity of the Donaldson-Futaki invariant for each degeneration. The degenerations of this threefold correspond to the polytopes:{
\begin{align*}
\Delta_0 &= \conv((-3,0,1),(-2,1,1),(2,1,-1),(3,0,-2),(0,-3,1),(0,1,1)); \\
\Delta_1 &= \conv((-3,0,1),(-2,1,1),(0,1,0),(2,1,1),(3,0,1),(0,-3,-2)); \\ 
\Delta_\infty &= \conv((-3,0,-1),(-2,1,-1),(2,1,1),(3,0,2),(0,-3,2),(0,0,-1),(0,1,-1)); \\
\Delta_y &= \conv((0, 0, -1/2), (3, 0, 1), (2, 1, 1), (0, 1, 0), (-2, 1, 1), (-3, 0, 1), (0, -3, 1)) \\ 
& \ \ \ ( \text{for } y 
\not \in \{0,1,\infty\} ).
\end{align*}}
In each case we have induced \(\CC^*\)-action given by \((0,0,1) \in N \times \ZZ \). Denote
\[
h_y(\xi_2) := (\vol \Delta_y) \cdot \DF_{(0,\xi_2)}(\X_{y,0,1}).
\]
Clearly positivity of \(h_y\) implies the positivity of \(\DF_\xi(\X_{y,0,1})\). 
Once more solving the integrals appearing in (\ref{eq:futaki-character}) analytically with $y \not \in \{0,1,\infty\}$  we obtain:
\begin{align*}
h_0(\xi_2) &= \frac{1}{3\xi_2^{4}}\cdot
{\left({\left(2   \xi_2^{3} - 3   \xi_2 - 3\right)} e^{4   \xi_2} + 3   {\left(3   \xi_2^{2} + 2\right)} e^{3   \xi_2} - 3   \xi_2 - 3\right)} e^{-3   \xi_2}
\\
h_1(\xi_2) &= \frac{1}{6   \xi_2^{4}}\cdot
{\left({\left(8   \xi_2^{3} + 6   \xi_2^{2} - 3\right)} e^{4   \xi_2} - 12   {\left(3   \xi_2^{2} - 3   \xi_2 + 1\right)} e^{3   \xi_2} + 12   \xi_2 + 15\right)} e^{-3   \xi_2} \\
h_\infty(\xi_2) &= -\frac{1}{6   \xi_2^{4}}\cdot
{\left(2   {\left(2   \xi_2^{3} - 3   \xi_2 - 3\right)} e^{4   \xi_2} - 3   {\left(3   \xi_2^{2} - 12   \xi_2 + 2\right)} e^{3   \xi_2} + 12   \xi_2 + 12\right)} e^{-3   \xi_2} \\
h_y(\xi_2) &= \frac{1}{6   \xi_2^{4}}\cdot{\left({\left(8   \xi_2^{3} + 6   \xi_2^{2} - 3\right)} e^{4   \xi_2 } - 3   {\left(3   \xi_2^{2} - 2\right)} e^{3   \xi_2} - 6   y - 3\right)} e^{-3   \xi_2} 
\end{align*}

Using the same precision as above for the  evaluations of the exponential functions at the lower and upper bounds for \(\xi_2\) gives estimates:
\begin{align*}
1.087 &< h_0(\xi_2) < 1.458 \\
2.178 &< h_1(\xi_2) < 2.470 \\
0.446 &< h_\infty(\xi_2) < 0.827 \\
4.151 &< h_y(\xi_2) < 4.309 \ \ \ \ \ \ \   \left( \text{for } y \not \in \{0,1,\infty\} \right)
\end{align*}
We can therefore conclude that the threefold 2.30 is \(K\)-stable, and must admit a non-trivial K\"ahler-Ricci soliton.
\end{example}

\begin{proof}[Proof of Theorem~\ref{thm:threefold-solitons}]
Looking at the combinatorial description for the threefolds given in \cite{tfano} one sees that for the cases no. 2.31, 3.18, 3.22, 3.24 and 4.8 $\deg \Phi$ admits a (non-trivial) involution $\sigma \in \GL(M)$. Then $\xi$ has to be contained in the line $N_\RR^{\sigma^*}$ and 
after choosing a basis  $e_1, e_2$ of $N_\RR$ with $\sigma^*(e_1)=-e_1$ and $\sigma^*(e_2)=e_2$ we may proceed as in Example~\ref{exp:threefold} with the following steps.
\begin{enumerate}
\item Find a closed form for $F_{X,\xi}(e_2)$. \label{item:closed-form}
\item Find sufficiently good bounds for $\xi_2$ with $F_{X,\xi_2e_2}(e_2)=0$ via intermediate value theorem.\label{item:bounds}
\item For every admissible choice of $y \in \PP^1$ find a closed form for $\DF_{\xi_2 e_2}(\X_{y,0,1})$.
  \label{item:formulae-for-DF}
\item Use elementary estimations to ensure positivity of $\DF_{\xi_2 e_2}(\X_{y,0,1})$ for all values of $\xi_2$ within the error bounds.
\end{enumerate}

For the case of threefold no. 3.23 there is no involution fixing $\deg \Phi$. In this case we take a more general approach to bound the value of the candidate \(\xi\). Here we make use of some elementary calculus. Note, that \(\xi\) is the unique solution to the equation \(\nabla G = 0 \), where 
\[
G(v) := \int_{\Box} \deg \bar \Phi(u) \cdot e^{\langle u, v \rangle}\, du = 
\int_{\Delta_0} e^{\langle u', (v,0) \rangle} \, du'.
\]
Now we identify a small closed rectangular region \(D \subset \RR^2 \) such that \(\nabla_n G > 0 \) holds along \(\partial D\), for \(n\) being a outer normal of the rectangle \(D\). This guarantees a solution to \(\nabla G = 0 \) in the interior of \(D\). Indeed, due to compactness, $D$ has to contain a minimum of $G$, which by our condition cannot lie on the boundary. Hence, the minimum is located in the interior and has to coincide with $\xi$, since $\nabla G$ necessarily vanishes. After bounding the value of $\xi$ we proceed with step~(\ref{item:formulae-for-DF}). 

However, for showing positivity of $\nabla_n G$ along $\partial D$ we have to use computer assistance. The approach is simple but computational intensive. First we again determine a closed form
for $\nabla G_n(\xi)$ which coincides with $F_{X,\xi}(n)$ up to a positive constant. Then we subdivide the faces of the boundary in sufficiently small segments, where one of coordinates is fixed and the other varies in a small interval. Using interval arithmetic when evaluating the closed form for $\nabla_n G(\xi)$ provides the positivity result. See also Example~\ref{exp:asymetric} for details of the computation and Appendix~\ref{sec:sagemath-code-asymmetric} for the implementation in SageMath.

The complete calculations are done using SageMath and can be found in the ancillary files\cite{sage-code} and as an online worksheet\footnote{CoCalc:\url{https://cocalc.com/projects/ae8e1663-e2ad-40b8-aec2-30faf4e6a54f/files/threefolds.sagews}}.
\end{proof}

\begin{example}[3.23 -- Blowup of the quadric in a point and a line passing through]
\label{exp:asymetric}
We follow the calculations outlined in the above proof of Theorem~\ref{thm:threefold-solitons}. As before we first have to find a closed form for $F_{X,\xi}(n)$ or $\nabla_n G(\xi)$, respectively. Then numerically we can find an approximation to \(\xi\) as the point:
\[
(x_0,x_1) = (0.26617786,  0.67164063).
\]
Setting \(\epsilon = 10^{-5}\), consider the square containing our approximation, given by:
\[
D = [x_0 - \epsilon,x_0+\epsilon] \times [x_1 - \epsilon, x_1 + \epsilon]
\]
Subdividing each edge of the boundary \(\partial D\) into line segments of length \(\epsilon/1500\), we use interval arithmetic to verify that the gradient of \(h\) is positive in the outer normal direction for each of these segments, in fact \(\nabla_n G > 5.536 \cdot 10^{-6}\) along \(\partial D\). Once again it remains to check the positivity of the Donaldson-Futaki invariant for each degeneration. The degenerations of this threefold correspond to the polytopes:
{
\
 \begin{align*}
\Delta_0 &= \conv((-3,0,1),(-2,1,1),(0,1,0),(0,1,1),(1,1,0),(2,0,-1),(2,-1,-1),\\
         &\qquad  (0,-3,1),(1,0,-1)); \\
\Delta_1 &= \conv((-3,0,1),(-2,1,1),(1,1,1),(2,0,1),(2,-1,0),(0,-3,-2),(0,1,0)); \\ 
\Delta_\infty &= \conv((-3,0,-1),(-2,1,-1),(0,1,0),(1,1,0),(2,0,1),(2,-1,2),(0,-3,2),\\
&\qquad (0,1,-1),(0,0,-1)) \\
\Delta_y &= \conv((0, 0, -1/2), (0, 1, 0), (1, 0, 0), (1, 1, 1), (2, 0, 1), (2, -1, 1), \\ & \qquad (-2, 1, 1), (-3, 0, 1), (0, -3, 1)) \ \ \  ( \text{for } y 
\not \in \{0,1,\infty\} ).
\end{align*}}
Interval arithmetic gives the following lower bounds on the Donaldson-Futaki invariants:
\begin{align*}
h_0(\xi_2) &> 1.2766 \\
h_1(\xi_2) &> 1.8401 \\
h_\infty(\xi_2) &> 0.1004 \\
h_y(\xi_2) &> 3.4443 \ \ \  ( \text{for } y 
\not \in \{0,1,\infty\} )
\end{align*}
We can therefore conclude that the threefold 3.23 is \(K\)-stable, and must admit a non-trivial K\"ahler-Ricci soliton. See also Appendix~\ref{sec:sagemath-code-asymmetric} for the SageMath code of the calculations.
\end{example}

\begin{remark}
  Note, that by Theorem~\ref{thm:threefold-solitons} and \cite[Thms. 6.1, 6.2]{is15} all known smooth Fano threefolds with complexity-one torus action admit a K\"ahler-Ricci soliton.
\end{remark}

\renewcommand{\thefootnote}{\fnsymbol{footnote}} 
\setcounter{footnote}{0}

In the Table~\ref{table:name} below we give the estimates found for the vector field \(\xi\) for each threefold in the list of \cite{tfano}. The threefolds 3.8\savefootnote{one-of-family}{This refers only to a particular element of the family admitting a 2-torus action}, 3.21, 4.5\repeatfootnote{one-of-family} were shown to admit a non-trivial K\"ahler-Ricci soliton in \cite{is15}. Applying 
steps (\ref{item:closed-form})-(\ref{item:bounds}) from the proof of Theorem~\ref{thm:threefold-solitons} provides also an approximation for the vector field \(\xi\) for these threefolds. These are included in the table, together with those threefolds shown in \cite{is15} to be K\"ahler-Einstein, to show the complete picture for the Fano threefolds described in \cite{tfano}. We can show that our approximations are correct to the nearest \(10^{-5}\).
\begin{table}[h]
\captionsetup{width=.95\linewidth}
\caption{Fano threefolds and their soliton vector fields in the canonical coordinates coming with the representation of the combinatorial data in \cite{tfano}.}
\begin{tabular}{=l+l+l}
\toprule
Threefold & $\xi$ & \\ \hline
\rowstyle{\color{gray}}
Q & $(0,0)$ \\
\rowstyle{\color{gray}}
2.24\repeatfootnote{one-of-family} & $(0,0)$ \\
\rowstyle{\color{gray}}
2.29 & $(0,0)$ \\
2.30 & $(0,0.51489)$ \\
2.31 & $(0.28550,0.28550)$\\
\rowstyle{\color{gray}}
2.32 & $(0,0)$ \\
3.8\repeatfootnote{one-of-family} & $(0,-0.76905)$ \\
\rowstyle{\color{gray}}
3.10\repeatfootnote{one-of-family} & $(0,0)$ \\
3.18 & $(0,0.37970)$ \\
\rowstyle{\color{gray}}
3.19 & $(0,0)$ \\
\rowstyle{\color{gray}}
3.20 & $(0,0)$ \\
3.21 & $(-0.69622,-0.69622)$ \\
3.22 & $(0,0.91479)$ \\
3.23 & $(0.26618,  0.67164)$ \\
3.24 & $(0,0.43475)$ \\
\rowstyle{\color{gray}}
4.4 &  $(0,0)$ \\
4.5\repeatfootnote{one-of-family} &  $(-0.31043,-0.31043)$ \\
\rowstyle{\color{gray}}
4.7 &  $(0,0)$ \\
4.8 &  $(0,0.62431)$ \\
\bottomrule
\end{tabular}
\label{table:name}
\end{table}



\appendix
\section{SageMath code for  examples}
\small
{\fontseries{lc}\selectfont
\setlength{\parindent}{0pt}

\subsection{No. 13: Degree $3$ / Singularity type $A_5A_1$.}
\label{sec:sagemath-code-surface}
 The combinatorial data is is given by $\Box=[-1,3]$ and $\Phi_0(u) = \min \{-u,0\},\; \Phi_\infty(u)= \frac{u-3}{4},\; \Phi_1(u) = \frac{u-1}{2}$.

\begin{pyin}
\ 
# We are working with interval arithmetic with  precision of 11 bit. This fixes
# the precision e.g. for evaluations of exponential functions
RIF=RealIntervalField(11)
# The base polytope/interval 
B=[-1,3]
# 3 PL functions on the interval [-1,3]
Phi1(u)=min_symbolic(-u,0)
Phi2(u)=1/4*u-3/4
Phi3(u)=1/2*u-1/2
# the degree of  Phi
degPhi=Phi1+Phi2+Phi3+2

\end{pyin}

{\bf Step (i) -- obtain a closed form for $F_{X,\xi}(1)$}\\
For this we have to analytically solve the integral $\int_{-1}^3 u \deg \bar \Phi (u) e^{\xi u} du$.%

\begin{pyin}[]
# The integral can be solved symbolically:
F(xi)=integral(degPhi(u)*u*exp(xi*u),u,*B)
show(F(xi))
\end{pyin}

\begin{pyprint}
$-\frac{3 \, {\left(\xi + 2\right)} e^{\left(-\xi\right)}}{4 \, \xi^{3}} + \frac{{\left(3 \, \xi - 2\right)} e^{\left(3 \, \xi\right)} + 8}{4 \, \xi^{3}}$
\end{pyprint}

{\bf Step (ii) -- find an estimate for the soliton candidate vector field $\xi$}
\begin{pyin}[]
# Choose upper and lower bound and hope that the exact solution lies in between
lower=-1.247; upper=-1.246
# define a real value xi0 between xi_1 and xi_2 representing the exact solution
xi0=RIF(lower,upper)
# Check whether Intermediate value theorem guarantees a zero between $\color{mycomment} \xi_-$ and 
# $\color{mycomment} \xi_+$, i.e evaluate F at $\color{mycomment} \xi_-$ and $\color{mycomment} \xi_+$ using interval arithmetic
if RIF(F(lower)) < 0 and RIF(F(upper)) > 0:
    print "Interval containing solution:", xi0.str(style='brackets')
\end{pyin}

\begin{pyprint}
  Interval guaranteed to contain solution: [-1.2471 .. -1.2451]
\end{pyprint}

{\bf Step (iii) \& (iv) -- obtain closed forms for $\operatorname{DF}_\xi(X_{y,0,1})$ and plug in $\xi$}\\
First have to symbolically solve the integrals  
$\int_\Box   e^{u\xi} \cdot \left((1+\Phi_y(u))^2 - (1+\sum_{z\neq y} \Phi_z(u))^2\right) du$ (which equal $\DF_\xi(\mathcal{X}_{y,0,1})$ up to scaling by a positive constant)
for every admissible choice of $y \in \mathbb P^1$ and then plug in the estimate for $\xi$ into the resulting expression.

\begin{pyin}[]
print("Stability test for test configurations:")
# There are 2 admissable choices $\color{mycomment} y=\infty$ and $\color{mycomment} y=1$
DF1(xi)=integral(exp(xi*u)*((Phi2+1)(u)^2-(Phi1+Phi3+1)(u)^2)/2,u,-1,B[1])
IDF1=RIF(DF1(xi0)) # evaluate DF1 at xi0 using interval arithmetic
show(DF1(xi) >= IDF1.lower())
DF2(xi)=integral(exp(xi*u)*((Phi3+1)(u)^2-(Phi1+Phi2+1)(u)^2)/2,u,-1,B[1])
IDF2=RIF(DF2(xi0)) # evaluate DF2 at xi0 using interval arithmetic
show(DF2(xi) >= IDF2.lower())

if IDF1 > 0  and IDF2 > 0: print("Surface is stable!")
else:
    if  IDF1 < 0  or IDF2 < 0: 
        print("Surface is not stable!")        
        # print upper bound for destablising DF value
        if IDF1 <= 0: show(DF1(xi) <= IDF1.upper())
        if IDF2 <= 0: show(DF2(xi) <= IDF2.upper())
    else: print("Cannot determine stability") # "not a > 0" $\color{mycomment}\not\Rightarrow$ "a <= 0" for intervals!
\end{pyin}
\begin{pyprint}
Stability test for test configurations
$\frac{{\left(4 \, \xi - 3\right)} e^{\left(3 \, \xi\right)} + 8 \, \xi}{16 \, \xi^{3}} + \frac{3 \, e^{\left(-\xi\right)}}{16 \, \xi^{3}} \geq \left(-0.0120\right)$
$\frac{{\left(8 \, \xi - 5\right)} e^{\left(3 \, \xi\right)} + 4 \, \xi + 8}{16 \, \xi^{3}} - \frac{3 \, e^{\left(-\xi\right)}}{16 \, \xi^{3}} \geq 0.248$
Surface is not stable!
$\frac{{\left(4 \, \xi - 3\right)} e^{\left(3 \, \xi\right)} + 8 \, \xi}{16 \, \xi^{3}} + \frac{3 \, e^{\left(-\xi\right)}}{16 \, \xi^{3}} \leq \left(-0.00537\right)$
\end{pyprint}

\subsection{Example~\ref{exp:threefold}(No. 2.30)}\ \\
\label{sec:sagemath-code-threefold}
The combinatorial data is is given by $\Box= \text{conv} (\, (-3,0),$ $(-2,1),(2,1),(3,0),(0,-3) \, )$ and  $\Phi_0(x,y) = \min \{0,-x\},\; \Phi_1(x,y)=  \min \{0,y\},\; \Phi_\infty(x,y) = \frac{x-y-1}{2}$.
\begin{pyin}[]
RIF=RealIntervalField(11)
B=Polyhedron(vertices = [[-3,0],[-2,1],[2,1],[3,0],[0,-3]]) # The base polytope
# 3 PL functions on B. If Phi is the minimum of affine functions of the form F(x,y) =
# a + b*x + c*y we represent Phi as the transpose of the matrix with rows [a,b,c].
Phi1 = matrix(QQ,[[0,0,0],[0,-1,0]]).transpose()
Phi2 = matrix(QQ,[[0,0,0],[0,0,1]]).transpose()
Phi3 = matrix(QQ,[[-1/2,1/2,-1/2]]).transpose()
# Form list of degeneration polyhedra
special_fibers = degenerations([Phi1,Phi2,Phi3],B)
\end{pyin}

\textbf{Step (i) -- obtain a closed form for $F_{X,\xi}$}\\
For this we have to analytically solve the integral $\int_\Box \langle u,v \rangle \deg \bar \Phi (u) e^{\langle u, (0,\xi_2) \rangle } du=$\\ $= \int_{\Delta_0} \langle u', (v,0) \rangle \cdot e^{\langle u', (0,\xi_2,0) \rangle} du'$ for $v$ varying over a basis of $N_\mathbb{R}$.

\begin{pyin}[]
L = vector([1,3,13]) # Choose a sufficiently general value for L
# Analytic solution of the integral
F1(xi_2)=intxexp(special_fibers[0],L,vector([0,xi_2,0]),vector([1,0,0])).simplify_full()
F2(xi_2)=intxexp(special_fibers[0],L,vector([0,xi_2,0]),vector([0,1,0])).simplify_full()
show(vector([F1,F2]))
\end{pyin}
\enlargethispage{0.7cm}
\begin{pyprint}
$\xi_{2} \ {\mapsto}\ \left(0,\,\frac{{\left({\left(2 \, \xi_{2}^{3} - 3 \, \xi_{2} - 3\right)} e^{\left(4 \, \xi_{2}\right)} + 12 \, \xi_{2} e^{\left(3 \, \xi_{2}\right)} + 3 \, \xi_{2} + 3\right)} e^{\left(-3 \, \xi_{2}\right)}}{\xi_{2}^{4}}\right)$
\end{pyprint}

\textbf{Step (ii) -- find an estimate for the soliton candidate vector field $\xi$.}
\begin{pyin}[]
# Choose upper and lower bound and hope that the exact solution lies in between
lower = 0.51368; upper = 0.51513
# define a real value xi_RIF between $\color{mycomment} \xi_2^-$ and $\color{mycomment} \xi_2^+$ representing the exact solution.
xi_RIF = RIF(lower,upper)
# Check whether intermediate value theorem guarantees a zero beween
# lower and upper, i.e evaluate F2 at lower and upper using interval arithmetic.
if RIF(F2(lower)) < 0 and RIF(F2(upper)) > 0:
       print "Interval containing solution:", xi_RIF.str(style='brackets')
\end{pyin}
\begin{pyprint}
Interval containing solution: [0.51367 .. 0.51514]
\end{pyprint}
 \textbf{Step (iii) \& (iv) -- obtain closed forms for $\DF_\xi(X_{y,0,1})$ and plug in $\xi$}\\
For this we first have to symbolically solve the integrals  
$h_y(\xi) := \text{vol}(\Delta_y) \cdot \operatorname{DF}_\xi(\mathcal{X}_{y,0,1})=\int_{\Delta_y}  \langle u, (0,0,1) \, \rangle e^{\langle u, (\xi,0) \rangle } du$  
for every (admissible) choice of $y \in \mathbb P^1$ and then plug in the estimate for $\xi$ into the resulting expression.
\begin{pyin}[]
H=[]; Ih=[] # storage for the functions h_y and the values h_y(xi) (as intervals)
# Symbollically solving integrals using Barvinok method
for P in special_fibers:
    h(xi_2) = intxexp(P,L,vector([0,xi_2,0]),vector([0,0,1])).simplify_full()   
    DF=RIF(h(xi_RIF))
    H.append(h); Ih.append(DF)
    show(h(xi_2) >= DF.lower())
if all([h > 0 for h in Ih]): print("Threefold is stable!")
else:
    if any([h <= 0 for h in Ih]):
        print("Threefold is not stable!")
        # print upper bound for destablising DF value
        if Ih[0] <= 0: show(H[0](xi_2) <= Ih[0].upper())
        if Ih[1] <= 0: show(H[1](xi_2) <= Ih[1].upper())
        if Ih[2] <= 0: show(H[2](xi_2) <= Ih[2].upper())
        if Ih[3] <= 0: show(H[3](xi_2) <= Ih[3].upper())
    else: print("Cannot determine stability")
\end{pyin}
\begin{pyprint}
  $\frac{{\left({\left(2 \, \xi_{2}^{3} - 3 \, \xi_{2} - 3\right)} e^{\left(4 \, \xi_{2}\right)} + 3 \, {\left(3 \, \xi_{2}^{2} + 2\right)} e^{\left(3 \, \xi_{2}\right)} - 3 \, \xi_{2} - 3\right)} e^{\left(-3 \, \xi_{2}\right)}}{3 \, \xi_{2}^{4}} \geq 0.927$
$\frac{{\left({\left(8 \, \xi_{2}^{3} + 6 \, \xi_{2}^{2} - 3\right)} e^{\left(4 \, \xi_{2}\right)} - 12 \, {\left(3 \, \xi_{2}^{2} - 3 \, \xi_{2} + 1\right)} e^{\left(3 \, \xi_{2}\right)} + 12 \, \xi_{2} + 15\right)} e^{\left(-3 \, \xi_{2}\right)}}{6 \, \xi_{2}^{4}} \geq 2.07$
$-\frac{{\left(2 \, {\left(2 \, \xi_{2}^{3} - 3 \, \xi_{2} - 3\right)} e^{\left(4 \, \xi_{2}\right)} - 3 \, {\left(3 \, \xi_{2}^{2} - 12 \, \xi_{2} + 2\right)} e^{\left(3 \, \xi_{2}\right)} + 12 \, \xi_{2} + 12\right)} e^{\left(-3 \, \xi_{2}\right)}}{6 \, \xi_{2}^{4}} \geq 0.253$
$\frac{{\left({\left(8 \, \xi_{2}^{3} + 6 \, \xi_{2}^{2} - 3\right)} e^{\left(4 \, \xi_{2}\right)} - 3 \, {\left(3 \, \xi_{2}^{2} - 2\right)} e^{\left(3 \, \xi_{2}\right)} - 6 \, \xi_{2} - 3\right)} e^{\left(-3 \, \xi_{2}\right)}}{6 \, \xi_{2}^{4}} \geq 4.06$
Threefold is stable!
\end{pyprint}
\subsection{Example~\ref{exp:asymetric}(No. 3.23)}\ \\
\label{sec:sagemath-code-asymmetric}
The combinatorial data is is given by $\Box= \text{conv} (\, (-3,0),(-2,1),(1,1),(2,0),(2,-1), (0,3)\, )$ and  $\Phi_0(x,y) = \min \{0,-x\},\; \Phi_1(x,y)=  \min \{0,y\},\; \Phi_\infty(x,y) = \min \{-y, \frac{x-y-1}{2} \}$.

\begin{pyin}[]
RIF=RealIntervalField(40)
# The base polytope
B=Polyhedron(vertices = [[-3,0],[-2,1],[1,1],[2,0],[2,-1],[0,-3]])
# 3 PL functions on B. If phi is the minimum of affine functions of the form 
# F(x,y) = a + b*x + c*y we represent phi as the transpose of the matrix 
# with rows [a,b,c].
Phi1 = matrix(QQ,[[0,0,0],[0,-1,0]]).transpose()
Phi2 = matrix(QQ,[[0,0,0],[0,0,1]]).transpose()
Phi3 = matrix(QQ,[[-1/2,1/2,-1/2],[0,0,-1]]).transpose()
# Form list of degeneration polyhedra
special_fibers = degenerations([Phi1,Phi2,Phi3],B)
\end{pyin}

\textbf{Step (i) -- obtain a closed form for $F_{X,\xi}$}\\
For this we have to analytically solve the integral $\int_\Box \langle u,v \rangle \deg \bar \Phi (u) e^{\langle u, \xi \rangle } du=$\\ $= \int_{\Delta_0} \langle u', (v,0) \rangle \cdot e^{\langle u', (\xi,0) \rangle} du'$ for $v$ varying over a basis of $N_\mathbb{R}$.

\begin{pyin}[]
L = vector([3,13,19]) # Choose a sufficiently general value for L
# Solve the integral analytically using Barvinok's recursion:
v1=vector([1,0,0]); v2=vector([0,1,0])
F1(xi_1,xi_2) = intxexp(special_fibers[0],L,vector([xi_1,xi_2,0]),v1).simplify_full()
F2(xi_1,xi_2) = intxexp(special_fibers[0],L,vector([xi_1,xi_2,0]),v2).simplify_full()
show(F1)
show(F2)
\end{pyin}

\begin{pyprint}
  $(\xi_1,\xi_2) \mapsto $
  $
\medmuskip=0mu
\thinmuskip=0mu
\thickmuskip=0mu
-\frac{
  \begin{aligned} 
      &\scriptstyle \big(4 \xi_{1}^{4} e^{(3 \xi_{1})} -
        \big(2 {({(\xi_{1} - 2)} e^{(4\xi_{1})} - {(\xi_{1} + 1)}
                e^{\xi_{1}} + 3 e^{(3 \xi_{1})})}\xi_{2}^{4} 
              + 4 \xi_{1}^{4} e^{(3 \xi_{1})} +
            (2 {(\xi_{1}^{2} - \xi_{1})} e^{(4\xi_{1})} 
         + \xi_{1} e^{(3\xi_{1})} \\
         &\scriptstyle + {(2 \xi_{1}^{2} + \xi_{1})} e^{\xi_{1}}) \xi_{2}^{3} 
       - 2 {(6 \xi_{1}^{2} e^{(3 \xi_{1})} +
                {(\xi_{1}^{3} - 4 \xi_{1}^{2})} e^{(4
                    \xi_{1})} - {(\xi_{1}^{3} + 2
                    \xi_{1}^{2})} e^{\xi_{1}})} \xi_{2}^{2} \\
               &\scriptstyle - {(3 \xi_{1}^{3} e^{(3\xi_{1})} 
             + 2 {(\xi_{1}^{4}-3\xi_{1}^{3})} e^{(4 \xi_{1})} +
                {(2 \xi_{1}^{4} + 3 \xi_{1}^{3})}
                e^{\xi_{1}})} \xi_{2}\big) e^{(4\xi_{2})} \\
              &\scriptstyle  + 2 (2 \xi_{1}^{3} \xi_{2}^{2}e^{(5 \xi_{1})} 
               + {(3 \xi_{1}^{2} +
                {(2 \xi_{1}^{2} - \xi_{1})} e^{(5\xi_{1})} + \xi_{1})} \xi_{2}^{3} -
            {(3 \xi_{1}^{4} + 3 \xi_{1}^{3} + {(2
                    \xi_{1}^{4} - 3 \xi_{1}^{3})} e^{(5
                    \xi_{1})})} \xi_{2} \\
& \scriptstyle -2{(\xi_{1}^{5} - \xi_{1}^{4})} e^{(5
                \xi_{1})}) e^{(3 \xi_{2})} - 4
        {(\xi_{1}^{3} \xi_{2}^{2} e^{(5 \xi_{1})} -
            {(\xi_{1}^{5} - \xi_{1}^{4})} e^{(5
                \xi_{1})})} e^{(2\xi_{2})}\big) e^{(-3 \xi_{2})}
\end{aligned}}{2   {\left(\xi_{1}^{7} \xi_{2} e^{\left(3   \xi_{1}\right)} - 2   \xi_{1}^{5} \xi_{2}^{3} e^{\left(3   \xi_{1}\right)} + \xi_{1}^{3} \xi_{2}^{5} e^{\left(3   \xi_{1}\right)}\right)}}$

  $(\xi_1,\xi_2) \mapsto $
  $
\medmuskip=0mu
\thinmuskip=0mu
\thickmuskip=0mu
-\frac{
    \begin{aligned}
      &\scriptstyle \big(6 \xi_{1}^{4} \xi_{2} e^{3 \xi_{1}} - 6
      \xi_{1}^{2} \xi_{2}^{3} e^{3 \xi_{1}} + 2 \xi_{1}^{4} e^{3
        \xi_{1}} - 6
      \xi_{1}^{2} \xi_{2}^{2} e^{3 \xi_{1}}-\;\big(\xi_{2}^{5} {\left(2 e^{4 \xi_{1}} - 3 e^{3 \xi_{1}} +
          e^{\xi_{1}}\right)} - 2 \xi_{1}^{4} \xi_{2} e^{3 \xi_{1}} \\
      &\scriptstyle +{\left(2 {\left(\xi_{1} - 1\right)} e^{4 \xi_{1}} -
          {\left(\xi_{1} - 3\right)} e^{3 \xi_{1}}
          - {\left(\xi_{1} + 1\right)} e^{\xi_{1}}\right)}\xi_{2}^{4} + 2 \xi_{1}^{4} e^{3 \xi_{1}}\\
      &\scriptstyle-\; {\left(2 {\left(\xi_{1}^{2} + 2 \xi_{1}\right)} e^{4
            \xi_{1}} - {\left(5 \xi_{1}^{2} + 2 \xi_{1}\right)} e^{3
            \xi_{1}} + {\left(\xi_{1}^{2} - 2 \xi_{1}\right)}
          e^{\xi_{1}}\right)} \xi_{2}^{3} \\
      &\scriptstyle-\; {\left(2 {\left(\xi_{1}^{3} + \xi_{1}^{2}\right)} e^{4
            \xi_{1}} - {\left(\xi_{1}^{3} - 3 \xi_{1}^{2}\right)} e^{3
            \xi_{1}} - {\left(\xi_{1}^{3} - \xi_{1}^{2}\right)}
          e^{\xi_{1}}\right)} \xi_{2}^{2}\big) e^{4 \xi_{2}}\\
      &\scriptstyle +\; 2 {\left(\xi_{1}^{4} e^{5 \xi_{1}} - 3 \xi_{1}^{2}
          \xi_{2}^{2} e^{5 \xi_{1}} - 2 {\left(\xi_{1} e^{5 \xi_{1}}
              - \xi_{1}\right)} \xi_{2}^{3}\right)} e^{3 \xi_{2}}\\
      &\scriptstyle -\; 2 {\left(\xi_{1}^{4} \xi_{2} e^{5 \xi_{1}} - \xi_{1}^{2}
          \xi_{2}^{3} e^{5 \xi_{1}} + \xi_{1}^{4} e^{5 \xi_{1}} - 3
          \xi_{1}^{2} \xi_{2}^{2} e^{5 \xi_{1}}\right)}
      e^{2\xi_{2}}\big)e^{-3 \xi_{2}}
    \end{aligned}
  } {2 {\left(\xi_{1}^{6} \xi_{2}^{2} e^{3 \xi_{1}} - 2 \xi_{1}^{4}
        \xi_{2}^{4} e^{3 \xi_{1}} + \xi_{1}^{2} \xi_{2}^{6} e^{3
          \xi_{1}}\right)}}$

\end{pyprint}
\textbf{Step (ii) -- find an estimate for the soliton candidate vector field $\xi$.}\\
We identify a small closed rectangle containing our estimate such that $ \nabla_n G > 0 $ for any outer normal of this rectangle, where $ G(\xi) = \int_{\Delta_0} e^{\langle u', (\xi,0) \rangle} du'$.
 This and uniqueness guarantee our candidate lies within the rectangle.
\begin{pyin}[]
x0=[ 0.26617786,  0.67164063] # Approximation for solution (e.g. obtained numerically)
#We hope xi lies in the square with centre x0 and side length 2e, where:
e = 0.00001
#We use interval arithmetic to check positivity of $\color{mycomment} \nabla_n G $ on the 
# boundary of the square. We check line segments along the boundary of length 2e/N.
N=2300
# Fix a rectangular region and hope for a solution in there
xi=[RIF(x0[0]-e,x0[0]+e), RIF(x0[1]-e,x0[1]+e)]
if all([RIF(F1(x0[0]+e,RIF(x0[1]-e+i*(2*e/N),x0[1]-e+(i+1)*(2*e/N)))) > 0
        for i in range(N)]):
    if all([RIF(-F1(x0[0]-e,RIF(x0[1]-e+i*(2*e/N),x0[1]-e+(i+1)*(2*e/N)))) > 0 
            for i in range(N)]):
        if all([RIF(F2(RIF(x0[0]-e+i*(2*e/N),x0[0]-e+(i+1)*(2*e/N)), x0[1]+e)) > 0
                for i in range(N)]):
            if all([RIF(-F2(RIF(x0[0]-e+i*(2*e/N),x0[0]-e+(i+1)*(2*e/N)), x0[1]-e)) > 0
                    for i in range(N)]):
                print "There is a root in"
                print xi[0].str(style="brackets"),"x",xi[1].str(style="brackets")
\end{pyin}
\begin{pyprint}
There is a root in
[0.26616785999976 .. 0.26618786000018] x [0.67163062999952 .. 0.67165063000085]
\end{pyprint}
 \textbf{Step (iii) \& (iv) -- obtain closed forms for $\DF_\xi(X_{y,0,1})$ and plug in $\xi$}\\
For this we first have to symbolically solve the integrals  
$h_y(\xi) := \text{vol}(\Delta_y) \cdot \operatorname{DF}_\xi(\mathcal{X}_{y,0,1})=\int_{\Delta_y}  \langle u, (0,0,1) \, \rangle e^{\langle u, (\xi,0) \rangle } du$  
for every choice of $y \in \mathbb P^1$ and then plug in the estimate for $\xi$ into the resulting expression.
\begin{pyin}[]
Ih=[] # Storage for the values h_y(xi) (as intervals)
print "Lower bounds for DF invariants"
for P in special_fibers:
    h(xi_1,xi_2) = intxexp(P,L,vector([xi_1,xi_2,0]),vector([0,0,1])).simplify_full()
    DF=RIF(h(*xi))
    Ih.append(DF)
    print(DF.lower())
if all([h > 0 for h in Ih]): print("Threefold is stable!")
else:
    if any([h <= 0 for h in Ih]): print("Threefold is not stable!")
        print("negative upper bounds for DF invariant:")
        # print upper bound for destablising DF value
        if Ih[0] <= 0: print(Ih[0].upper())
        if Ih[1] <= 0: print(Ih[1].upper())
        if Ih[2] <= 0: print(Ih[2].upper())
        if Ih[3] <= 0: print(Ih[3].upper())
    else: print("Cannot determine stability")
\end{pyin}
\enlargethispage{1cm}
\begin{pyprint}
lower bounds for DF invariants:
1.2766364162
1.8401675971
0.10047917120
3.4443270408
Threefold is stable!  
\end{pyprint}

\bibliographystyle{alpha}
\bibliography{dpsolitons}
\end{document}